\date{}
\newtheorem{theorem}{Theorem}
\newtheorem{lemma}[theorem]{Lemma}
\newtheorem{remark}[theorem]{Remark}
\theoremstyle{definition} 
\newcommand{\dual}[2]{\langle#1\hspace*{.5mm},#2\rangle}
\newcommand{\vdual}[2]{(#1\hspace*{.5mm},#2)}
\newcommand{\abs}[1]{\vert #1 \vert}
\newcommand{\norm}[3][]{#1\|#2#1\|_{#3}}
\newcommand{\diam}{\mathrm{diam}}
\newcommand{\wilde}{\widetilde}
\newcommand{\wat}{\widehat}
\def\pwnabla{\nabla_\TT}
\def\div{{\rm div\,}}
\def\pwdiv{ {\rm div}_{\TT}\,}
\newcommand{\trace}{\gamma}
\newcommand{\jump}[1]{[#1]}
\newcommand{\hp}{{hp}}
\def\uc{u^c}
\def\Omc{\Omega^c}
\newcommand{\opt}{{\rm opt}}
\newcommand{\R}{\ensuremath{\mathbb{R}}}
\newcommand{\N}{\ensuremath{\mathbb{N}}}
\newcommand{\HH}{\ensuremath{\mathbf{H}}}
\newcommand{\LL}{\ensuremath{\mathbf{L}}}
\newcommand{\nn}{\ensuremath{\mathbf{n}}}
\newcommand{\vv}{\ensuremath{\mathbf{v}}}
\newcommand{\TT}{\ensuremath{\mathcal{T}}}
\newcommand{\tT}{\ensuremath{\mathbf{T}}}
\newcommand{\cS}{\ensuremath{\mathcal{S}}}
\newcommand{\sS}{\ensuremath{\mathbf{S}}}
\newcommand{\el}{\ensuremath{T}}
\newcommand{\ed}{\ensuremath{e}}
\newcommand{\OO}{\ensuremath{\mathcal{O}}}
\newcommand{\EE}{\ensuremath{\mathcal{E}}}
\newcommand{\GG}{\ensuremath{\mathbf{G}}}
\newcommand{\ssigma}{{\boldsymbol\sigma}}
\newcommand{\ttau}{{\boldsymbol\tau}}
\newcommand{\qq}{{\boldsymbol{q}}}
\newcommand{\uu}{\boldsymbol{u}}
\newcommand{\n}{\boldsymbol{n}}
\newcommand{\slo}{\mathcal{V}}
\newcommand{\slp}{\widetilde\slo}
\newcommand{\dlo}{\mathcal{K}}
\newcommand{\dlp}{\widetilde\dlo}
\newcommand{\adlo}{\mathcal{K}'}
\newcommand{\jn}{\text{jn}}
\newcommand{\bm}{\text{bm}}
\newcounter{constantsnumber}
\def\setc#1{
  \ifthenelse{\equal{#1}{poinc}}{C_{\rm edge}}{ 
   \refstepcounter{constantsnumber}
   \label{const#1}C_{\theconstantsnumber}}}
\def\c#1{
  \ifthenelse{\equal{#1}{poinc}}{C_{\rm edge}}{ 
    C_{\ref{const#1}}}}
\title{DPG method with optimal test functions\\for a transmission problem
\thanks{Supported by CONICYT through FONDECYT projects 1110324, 3140614
        and Anillo ACT1118 (ANANUM).}}
\author{
Norbert Heuer$^\dagger$
\and
Michael Karkulik\thanks{
Facultad de Matem\'aticas, Pontificia Universidad Cat\'olica de Chile,
Avenida Vicu\~na Mackenna 4860, Macul, Santiago, Chile,
email: {\tt \{nheuer,mkarkulik\}@mat.puc.cl}}}
\begin{document}
\maketitle
\begin{abstract}
  We propose and analyze a numerical method to solve an elliptic transmission problem in full space.
  The method consists of a variational formulation
  involving standard boundary integral operators on the coupling interface
  and an ultra-weak formulation in the interior.
  To guarantee the discrete inf-sup condition, the system is discretized
  by the DPG method with optimal test functions.
  We prove that principal unknowns are approximated quasi-optimally.
  Numerical experiments for problems with smooth and singular solutions
  confirm optimal convergence orders.

\bigskip
\noindent
{\em Key words}: transmission problem, coupling, DPG method with optimal test functions,
                 ultra-weak formulation, boundary elements, finite elements

\noindent
{\em AMS Subject Classification}: 65N30, 35J20, 65N38
\end{abstract}
\section{Introduction}
In recent years, Demkowicz and Gopalakrishnan have established the
\textit{discontinuous Petrov-Galerkin method with optimal test functions}
as a method that is designed to be stable \cite{DemkowiczG_11_CDP,dg11}.
In particular, it aims at robust discretizations of singularly perturbed problems,
cf.~\cite{BroersenS_14_IMA,BroersenS_14_CAMWA,CaloCN_14,ChanHBD_14,DemkowiczH_13}.
In general terms, this method consists in applying Petrov-Galerkin approximations with optimal test functions
to (in most cases) ultra-weak variational formulations (see~\cite{CessenatD_98_UWF} for an early use of
ultra-weak formulations). Such a variational formulation is obtained by element-wise integration
by parts on some partition and the replacement of appearing terms on the elements' boundaries with new
unknowns (see~\cite{BottassoMS_02_DPG} for the idea of introducing independent boundary unknowns).

Until recently the DPG method with optimal test functions has been studied only for problems on bounded domains.
In~\cite{HeuerK_14_DPGBEM}, we considered boundary value and screen problems of Neumann type which can be reduced
to a hypersingular boundary integral equation. This includes the case of a PDE on an unbounded domain.
In this paper we study for the first time a DPG strategy with optimal test functions for a transmission problem.
This problem is of academic nature (Poisson equation) and set in the full space.
We expect that our fairly general approach is applicable to more practical cases of
transmission problems with singular perturbations on bounded subdomains. This is feasible whenever there
is a DPG finite element technique available for the corresponding singularly perturbed problem on the subdomain.

Transmission problems often appear in the modeling of multiple physical phenomena, and therefore require
the combination of two, possibly different, numerical methods.
A popular approach is the coupling of finite elements and boundary elements.
Whereas finite elements can be used in a relatively straightforward way to solve nonlinear PDEs with
space-dependent coefficients and sources, it is a challenging task to apply them on unbounded domains.
On the other hand, boundary elements can deal naturally with unbounded domains but, by construction,
work best for linear, homogeneous PDEs with constant coefficients.

There are different approaches for the coupling of finite and boundary elements. In this paper we consider
the so-called nonsymmetric or one-equation coupling, also referred to as Johnson-N\'ed\'elec coupling.
It has the practical advantage of involving only two of the four classical boundary integral operators.
This coupling was mathematically analyzed first in~\cite{BrezziJ_79_FEMBEM,BrezziJN_78_FEMBEM,JohnsonN_80_FEMBEM}.
The mathematical proofs in the mentioned works require one of the involved boundary integral operators
(the double-layer operator) to be compact. In case of the Laplace equation this property does not hold on
polyhedral domains and, in the case of linear elasticity, not even on domains with smooth boundary.
In~\cite{s13}, Sayas proved the well-posedness of the Johnson-N\'ed\'elec coupling without using
compactness arguments and thus gave a mathematical justification of the nonsymmetric coupling even on
polyhedral domains. Since then, different authors have re-considered nonsymmetric couplings,
see~\cite{affkmp13,ffkp14,ghs12,OfS_14_FEMBEM,Steinbach_11_FEMBEM}. For an extensive discussion of
this topic, we refer to~\cite{f14}.

In this paper, we extend the nonsymmetric coupling of Johnson-N\'ed\'elec to a DPG method with optimal test functions.
More specifically, given a coupled system of PDEs (one in a bounded domain, one in the exterior),
we use an ultra-weak formulation for the interior part coupled to classical boundary integral equations
for the exterior problem. The whole system is discretized by the Petrov-Galerkin method with optimal test functions.

An alternative approach would be to couple standard boundary elements with a DPG scheme restricted to the
interior problem, i.e., optimal test functions are only used for the interior (finite element) part.
In this case, however, several difficulties arise. For instance, it is unclear how to choose the remaining test
functions to generate a square system. The design and analysis of a DPG-BEM coupling is left for future research.
In contrast, the approach of computing optimal test functions for the whole system (as in this paper)
appears to be the most generic one and deserves a thorough analysis.

An outline of this paper is as follows. In Section~\ref{section:math}, we
introduce the model problem and present the mathematical framework.
We also summarize some results related to the DPG method, to be used subsequently.
In Subsection~\ref{section:results} we formulate the method and state the main results.
Theorem~\ref{thm:solve} establishes stability of the continuous variational formulation
and Theorem~\ref{thm:opt} shows the quasi-optimality of conforming discretizations (so-called C\'ea-lemma).
Proofs of the two theorems are given in Section~\ref{sec:proofs}. Principal part of these
proofs involves the analysis of adjoint problems. This analysis is made in Section~\ref{section:adjoint}.
In Section~\ref{section:numerics} we present some numerical results that underline our theory.
Some conclusions are drawn in Section~\ref{section:conclusio}.
\section{Mathematical setting and main results}\label{section:math}
Let $\Omega\subset\R^d$, $d\geq2$, be a bounded Lipschitz domain with boundary $\Gamma$.
Our model transmission problem is as follows:
given volume data $f$ and jumps $u_0$, $\phi_0$, find $u$ and $u^c$ such that
\begin{subequations}\label{eq:tp}
\begin{align}
  -\div \nabla u &= f \text{ in } \Omega\label{eq:tp:a}\\
  -\div \nabla \uc &= 0 \text{ in } \Omc := \R^d\setminus\overline\Omega\label{eq:tp:b}\\
  u-\uc &= u_0 \text{ on } \Gamma\label{eq:tp:c}\\
  \frac{\partial}{\partial\n_\Omega}(u-\uc) &= \phi_0 \text{ on } \Gamma\label{eq:tp:d}\\
  \uc(x) &= \OO(\abs{x}^{-1}) \text{ as } \abs{x}\rightarrow\infty.\label{eq:tp:e}
\end{align}
\end{subequations}
The normal vector $\n_\Omega$ on $\Gamma$ points in direction of $\Omc$. 
Here, $f\in L_2(\Omega)$, $u_0\in H^{1/2}(\Gamma)$, $\phi_0\in H^{-1/2}(\Gamma)$, and the
spaces are of standard Sobolev type (some more details are given in the next section).
For $d=2$, we assume that $\diam(\Omega)<1$ and $\int_\Omega f + \int_\Gamma \phi_0 = 0$.
The scaling condition on $\Omega$ is to ensure the ellipticity of
the single layer operator, and the compatibility condition on the data $f$ and $\phi_0$
is needed in order to use the radiation condition in the form~\eqref{eq:tp:e}.
\subsection{Abstract DPG method}\label{section:dpg}
We briefly recall the abstract framework of the DPG method with optimal test functions,
cf.~\cite{dg11,DemkowiczG_11_CDP,zmdgpc11}. We state this in a
form that will be convenient for the forthcoming analysis.
The continuous framework is provided by the following result which
is a consequence of the open mapping theorem and the properties
of conjugate operators, cf.~\cite[Chapters~II.5,~VII.1]{y95}.
In this manuscript, all suprema are taken over the indicated sets \textit{except} $0$.
\begin{lemma}\label{lem:basic}
  Denote by $U$ and $V$ two reflexive Banach spaces. Let
  $B:U\rightarrow V'$ be a bijective and bounded linear operator with conjugate operator
  $B':V\rightarrow U'$. Define
  \begin{align*}
    \norm{\uu}{U,\opt} := \sup_{\vv\in V}\frac{\dual{B\uu}{\vv}}{\norm{\vv}{V}}
    \quad\text{ and }\quad \norm{\vv}{V,\opt} := \sup_{\uu\in U}\frac{\dual{B\uu}{\vv}}{\norm{\uu}{U}}.
  \end{align*}
  Then both operators $B$ and $B'$ are isomorphisms, and
  $\norm{\cdot}{U,\opt}$ and $\norm{\cdot}{V,\opt}$ define norms in $U$ and $V$ which are
  equivalent to $\norm{\cdot}{U}$ and $\norm{\cdot}{V}$, respectively.
  Furthermore, there holds
  \begin{align}\label{lem:basic:norms}
    \norm{\uu}{U} = \sup_{\vv\in V}\frac{\dual{B\uu}{\vv}}{\norm{\vv}{V,\opt}}
    \quad\text{ and }\quad
    \norm{\vv}{V} = \sup_{\uu\in U}\frac{\dual{B\uu}{\vv}}{\norm{\uu}{U,\opt}}.
  \end{align}
\end{lemma}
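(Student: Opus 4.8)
The plan is to identify the two optimal norms with genuine dual-space norms, namely $\norm{\uu}{U,\opt}=\norm{B\uu}{V'}$ and $\norm{\vv}{V,\opt}=\norm{B'\vv}{U'}$, after which every assertion follows from elementary facts about isomorphisms of reflexive Banach spaces.

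First I would settle that $B$ is an isomorphism. Since $B$ is a bounded bijection between Banach spaces, the open mapping theorem (in the form of the bounded inverse theorem) yields that $B^{-1}:V'\to U$ is bounded. The standard theory of conjugate operators then gives that $B'$ is an isomorphism as well, with $(B')^{-1}=(B^{-1})'$; in particular $B'$ is bounded and bounded below.

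Next comes the key identification. Passing from the signed supremum to the dual norm costs nothing, since replacing $\vv$ by $-\vv$ flips the sign of the quotient; hence
\begin{align*}
  \norm{\uu}{U,\opt}=\sup_{\vv\in V}\frac{\dual{B\uu}{\vv}}{\norm{\vv}{V}}=\norm{B\uu}{V'}.
\end{align*}
Using the defining relation $\dual{B\uu}{\vv}=\dual{B'\vv}{\uu}$ of the conjugate operator (interpreted through the reflexivity $V''=V$), the same argument gives
\begin{align*}
  \norm{\vv}{V,\opt}=\sup_{\uu\in U}\frac{\dual{B'\vv}{\uu}}{\norm{\uu}{U}}=\norm{B'\vv}{U'}.
\end{align*}
These are honest norms because $B$ and $B'$ are injective, and their equivalence to $\norm{\cdot}{U}$ and $\norm{\cdot}{V}$ is immediate from the boundedness of $B,B^{-1}$ and of $B',(B')^{-1}$; for instance $\norm{\uu}{U,\opt}=\norm{B\uu}{V'}\le\|B\|\,\norm{\uu}{U}$, while $\norm{\uu}{U}=\norm{B^{-1}B\uu}{U}\le\|B^{-1}\|\,\norm{\uu}{U,\opt}$.

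Finally, for the two identities in~\eqref{lem:basic:norms} I would substitute the formulas just obtained and change variables in the supremum. Inserting $\norm{\vv}{V,\opt}=\norm{B'\vv}{U'}$, rewriting the numerator as $\dual{B\uu}{\vv}=\dual{B'\vv}{\uu}$, and setting $\psi:=B'\vv$ (which runs over all of $U'\setminus\{0\}$ as $\vv$ runs over $V\setminus\{0\}$, by bijectivity of $B'$) gives
\begin{align*}
  \sup_{\vv\in V}\frac{\dual{B\uu}{\vv}}{\norm{\vv}{V,\opt}}=\sup_{\psi\in U'}\frac{\dual{\psi}{\uu}}{\norm{\psi}{U'}}=\norm{\uu}{U},
\end{align*}
the last step being the Hahn--Banach characterization of the norm. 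The companion identity follows by the symmetric argument, substituting $\phi:=B\uu$ and using that $B$ maps onto $V'$ together with the reflexivity of $V$. I do not anticipate a genuine obstacle here: the whole proof hinges on the two clean identifications above, and the only points demanding care are the bookkeeping of the pairings between $U,U'$ and $V,V'$ (so that the conjugate relation is applied with the correct interpretation) and the harmless passage between the signed supremum and the dual norm.
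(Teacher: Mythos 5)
Your proof is correct and takes essentially the same route the paper relies on: the paper gives no detailed argument but attributes the lemma to the open mapping theorem and the properties of conjugate operators (citing Yosida, and Prop.~2.1 of the Zitelli et al.\ reference for~\eqref{lem:basic:norms}), which is exactly your strategy. Your identifications $\norm{\uu}{U,\opt}=\norm{B\uu}{V'}$ and $\norm{\vv}{V,\opt}=\norm{B'\vv}{U'}$, followed by the change of variables $\psi:=B'\vv$ (resp.\ $\phi:=B\uu$) and the Hahn--Banach characterization of the norm, supply precisely the details behind those citations, with the reflexivity hypothesis correctly invoked where it is actually needed, namely to read $B'$ as a map $V\to U'$.
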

For a proof of~\eqref{lem:basic:norms} we refer to~\cite[Prop.~2.1]{zmdgpc11}.
Now suppose additionally that $V$ is a Hilbert space.
Given a bilinear form $b:U\times V\rightarrow \R$, and
a linear functional $\ell\in V'$, we aim to
\begin{align}\label{eq:var}
  \text{find } \uu\in U \text{ such that } b(\uu,\vv)=\ell(\vv) \quad\text{ for all } \vv\in V.
\end{align}
For an approximation space $U_\hp\subset U$, the Petrov-Galerkin method with optimal
test functions is to
\begin{align}\label{eq:dpg}
  \text{find } \uu_\hp\in U_\hp \text{ such that } b(\uu_\hp,\vv_\hp) = \ell(\vv_\hp)\quad\forall \vv_\hp\in \Theta(U_\hp).
\end{align}
Here, $\Theta:\;U\to V$ is the trial-to-test operator defined by
\begin{align*}
   \dual{\Theta \uu}{\vv}_V = b(\uu,\vv)\quad\forall \vv\in V
\end{align*}
and $\dual{\cdot}{\cdot}_V$ is the inner product in $V$ which induces the norm $\norm{\cdot}{V}$.
The following result is a consequence of the Babu\v{s}ka-Brezzi theory~\cite{b70,b74,xz03}.
For a proof of the best approximation property see~\cite[Thm.~2.2]{DemkowiczG_11_CDP}.
\begin{lemma}\label{lem:dpg}
  Define the operator $B:U\rightarrow V'$ by $B: \uu\mapsto(\vv\mapsto b(\uu,\vv))$
  and suppose that the assumptions from Lemma~\ref{lem:basic} hold. Then,~\eqref{eq:var} and~\eqref{eq:dpg}
  have unique solutions $\uu\in U$ and $\uu_\hp\in U_\hp$, respectively. Furthermore, they satisfy
  \begin{align}\label{lem:dpg:stab}
    \norm{\uu}{U,\opt} \leq \norm{\ell}{V'}
  \end{align}
  and
  \begin{align}\label{lem:dpg:opt}
    \norm{\uu-\uu_\hp}{U,\opt} = \inf_{\uu'_\hp\in U_\hp}\norm{\uu-\uu'_\hp}{U,\opt}.
  \end{align}
\end{lemma}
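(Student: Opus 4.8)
The plan is to reduce the whole lemma to one observation: the optimal test norm makes the trial-to-test operator $\Theta$ an isometry, after which the Petrov--Galerkin scheme becomes a symmetric Galerkin problem in the Hilbert space $V$. First I would dispose of the continuous problem. Since $B$ is bijective by Lemma~\ref{lem:basic} and $\ell\in V'$, there is a unique $\uu\in U$ with $B\uu=\ell$, i.e.\ $b(\uu,\vv)=\ell(\vv)$ for all $\vv\in V$, which is exactly~\eqref{eq:var}. The stability bound~\eqref{lem:dpg:stab} then follows directly from the definition of the optimal norm, since
\begin{align*}
  \norm{\uu}{U,\opt}=\sup_{\vv\in V}\frac{\dual{B\uu}{\vv}}{\norm{\vv}{V}}=\sup_{\vv\in V}\frac{\ell(\vv)}{\norm{\vv}{V}}=\norm{\ell}{V'}.
\end{align*}

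The decisive step is the identity
\begin{align*}
  \norm{\uu}{U,\opt}=\sup_{\vv\in V}\frac{b(\uu,\vv)}{\norm{\vv}{V}}=\sup_{\vv\in V}\frac{\dual{\Theta\uu}{\vv}_V}{\norm{\vv}{V}}=\norm{\Theta\uu}{V}\qquad\text{for all }\uu\in U,
\end{align*}
which is immediate from the definition of $\Theta$ together with Riesz representation in $V$. Thus $\Theta$ is an isometry from $(U,\norm{\cdot}{U,\opt})$ into $V$, and in particular it is injective on $U_\hp$ because $\norm{\cdot}{U,\opt}$ is equivalent to $\norm{\cdot}{U}$ by Lemma~\ref{lem:basic}. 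I would then rewrite~\eqref{eq:dpg}: every test function is of the form $\vv_\hp=\Theta\uu'_\hp$ with $\uu'_\hp\in U_\hp$, and using $b(\uu_\hp,\Theta\uu'_\hp)=\dual{\Theta\uu_\hp}{\Theta\uu'_\hp}_V$ together with $\ell(\Theta\uu'_\hp)=b(\uu,\Theta\uu'_\hp)=\dual{\Theta\uu}{\Theta\uu'_\hp}_V$, the scheme is equivalent to finding $\uu_\hp\in U_\hp$ with
\begin{align*}
  \dual{\Theta\uu_\hp}{\Theta\uu'_\hp}_V=\dual{\Theta\uu}{\Theta\uu'_\hp}_V\qquad\text{for all }\uu'_\hp\in U_\hp.
\end{align*}
The associated bilinear form $(\uu_\hp,\uu'_\hp)\mapsto\dual{\Theta\uu_\hp}{\Theta\uu'_\hp}_V$ is symmetric, bounded, and coercive on $U_\hp$, its coercivity being the isometry identity combined with norm equivalence; hence Lax--Milgram gives existence and uniqueness of $\uu_\hp$ and settles well-posedness of~\eqref{eq:dpg}.

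Finally, the last display states precisely that $\dual{\Theta(\uu-\uu_\hp)}{\Theta\uu'_\hp}_V=0$ for all $\uu'_\hp\in U_\hp$, so $\Theta\uu_\hp$ is the $V$-orthogonal projection of $\Theta\uu$ onto the subspace $\Theta(U_\hp)$. The best-approximation property of orthogonal projections then yields $\norm{\Theta(\uu-\uu_\hp)}{V}=\inf_{\uu'_\hp\in U_\hp}\norm{\Theta(\uu-\uu'_\hp)}{V}$, and applying the isometry identity to $\uu-\uu_\hp$ and to each $\uu-\uu'_\hp$ turns this into~\eqref{lem:dpg:opt}. I do not anticipate a serious obstacle: all the content sits in the one-line isometry identity, and the remaining work is bookkeeping. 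The only points requiring care are confirming that $\Theta(U_\hp)$ is exactly the test space so that Galerkin orthogonality lands in $\Theta(U_\hp)$, and invoking the norm equivalence of Lemma~\ref{lem:basic} to pass between coercivity measured in $\norm{\cdot}{U,\opt}$ and well-posedness stated in $\norm{\cdot}{U}$.
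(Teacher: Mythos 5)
Your proof is correct, but it does not follow the paper's route, because the paper gives no proof of this lemma at all: it declares the result a consequence of Babu\v{s}ka--Brezzi theory and cites \cite[Thm.~2.2]{DemkowiczG_11_CDP} for the best-approximation property~\eqref{lem:dpg:opt}. What you have written is a self-contained argument in the spirit of that cited reference: the identity
\begin{align*}
  \norm{\uu}{U,\opt}=\sup_{\vv\in V}\frac{\dual{\Theta\uu}{\vv}_V}{\norm{\vv}{V}}=\norm{\Theta\uu}{V}
\end{align*}
(Riesz representation plus Cauchy--Schwarz) makes $\Theta$ an isometry from $(U,\norm{\cdot}{U,\opt})$ into $V$, so the Petrov--Galerkin scheme with test space $\Theta(U_\hp)$ becomes a Ritz--Galerkin method for the symmetric, bounded, coercive form $(\uu,\uu')\mapsto\dual{\Theta\uu}{\Theta\uu'}_V$, and~\eqref{lem:dpg:opt} is then Pythagoras for the $V$-orthogonal projection onto $\Theta(U_\hp)$; your computation even upgrades~\eqref{lem:dpg:stab} to an equality. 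Compared with the inf-sup framing the paper invokes, your route is more elementary (no Babu\v{s}ka--Brezzi machinery), at the price of using the Hilbert structure of $V$ essentially --- which the lemma assumes anyway. Two hypotheses that your argument uses silently should be made explicit: first, $\Theta$ is well defined precisely because $\vv\mapsto\dual{B\uu}{\vv}$ is bounded on the Hilbert space $V$ (boundedness of $B$ from Lemma~\ref{lem:basic}); second, the Lax--Milgram/projection step needs $U_\hp$ (equivalently $\Theta(U_\hp)$) to be closed, since $U$ itself is only a reflexive Banach space --- this is harmless because approximation spaces here are finite-dimensional, and the paper's cited route needs the same assumption, but it is the one place where ``bookkeeping'' hides an actual requirement.
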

\subsection{Sobolev spaces}
We use the standard Sobolev spaces $L_2(\omega)$, $H^1(\omega)$, $H^1_0(\omega)$, $\HH(\div,\omega)$,
$\HH_0(\div,\omega)$
for Lipschitz domains $\omega\subset\R^d$. Vector-valued spaces and their elements will be denoted
by bold symbols.
In addition, we use spaces on the boundaries of Lipschitz domains $\omega$. Denoting by $\trace_\omega$ the
trace operator, we define
\begin{align*}
  H^{1/2}(\partial\omega) := \left\{ \trace_\omega u : u\in H^1(\omega) \right\}
  \quad\text{ and its dual }\quad H^{-1/2}(\partial\omega) := [H^{1/2}(\partial\omega)]'
\end{align*}
and equip them with the canonical trace norm and dual norm, respectively.
Here, duality is understood with respect to the extended $L_2$ inner product
$\dual{\cdot}{\cdot}_{\partial\omega}$.
The $L_2(\Omega)$ inner product will be denoted by $\vdual{\cdot}{\cdot}_\Omega$.
Let $\TT$ denote a disjoint partition of $\Omega$ into open Lipschitz sets $\el\in\TT$, i.e.,
$\cup_{\el\in\TT}\overline\el= \overline\Omega$. The set of all boundaries of all elements is the
skeleton $\cS := \left\{ \partial\el \mid \el\in\TT \right\}$.
By $\n_M$ we mean the outer normal vector on $\partial M$ for a Lipschitz set $M$.
On a partition, we use product spaces $H^1(\TT)$ and $\HH(\div,\TT)$, equipped with respective
product norms.
The symbols $\pwnabla$ and $\pwdiv$ denote the $\TT$-piecewise gradient and divergence operators.
We use spaces on the skeleton $\cS$ of $\TT$, namely
\begin{align*}
  H^{1/2}(\cS) &:=
  \Big\{ \wat u \in \Pi_{\el\in\TT}H^{1/2}(\partial\el):
  \exists w\in H^1(\Omega) \text{ such that } 
  \wat u|_{\partial\el} = w|_{\partial\el}\; \forall \el\in\TT \Big\},\\
  H^{-1/2}(\cS) &:= \Big\{
    \wat\sigma \in \Pi_{\el\in\TT}H^{-1/2}(\partial\el):
  \exists \qq\in\HH(\div,\Omega) \text{ such that } 
  \wat\sigma|_{\partial\el} = \qq\cdot\n_{\el}\;
  \forall\el\in\TT \Big\}.
\end{align*}
These spaces are equipped with the norms
\begin{align*}
  \norm{\wat u}{H^{1/2}(\cS)} &:= \inf
  \left\{ \norm{w}{H^1(\Omega)} : w\in H^1(\Omega) \text{ such that }
  \wat u|_{\partial\el}=w|_{\partial\el}\; \forall\el\in\TT \right\},\\
  \norm{\wat\sigma}{H^{-1/2}(\cS)} &:= \inf
  \left\{ \norm{\qq}{\HH(\div,\Omega)} : \qq\in\HH(\div,\Omega) \text{ such that }
  \wat\sigma|_{\partial\el}=\qq\cdot\n|_{\el}\; \forall\el\in\TT \right\}.
\end{align*}
Note that we think of the skeleton $\cS$ not as one geometric object, but rather as the set of
boundaries of all elements.
Consequently, we have defined $H^{1/2}(\cS)$ and $H^{-1/2}(\cS)$ not as canonical
trace spaces but as product spaces of trace spaces. This subtle difference
simplifies the subsequent analysis.
For two functions $\wat u \in \prod_{\el\in\TT}H^{1/2}(\partial\el)$ and
$\wat\sigma\in \prod_{\el\in\TT}H^{-1/2}(\partial\el)$ we use the notation
\begin{align*}
  \dual{\wat u}{\wat\sigma}_\cS
  := \sum_{\el\in\TT}\dual{\wat u|_\el}{\wat\sigma|_\el}_{\partial\el}.
\end{align*}
Note that for $v\in H^1(\Omega)$ and $\wat\sigma\in H^{-1/2}(\cS)$ integration by parts shows that
\begin{align*}
  \dual{v}{\wat\sigma}_\cS = \dual{v}{\wat\sigma}_\Gamma,
\end{align*}
and so the above left-hand side makes sense also for functions
$v\in H^1(\Omega)$ and $\wat\sigma\in H^{-1/2}(\Gamma)$.
We use an analogous duality pairing $\dual{\wat u}{\ttau\cdot\n}_\cS$
for functions $\wat u\in H^{1/2}(\Gamma)$ and $\ttau\in \HH(\div,\Omega)$.
For $v\in H^1(\TT)$ and $\ttau\in\HH(\div,\TT)$ we define norms of their jumps across $\cS$ by duality,
\begin{align*}
  \norm{\jump{v\n}}{1/2,\cS} := \sup_{\wat\sigma\in H^{-1/2}(\cS)}
  \frac{\dual{v}{\wat\sigma}_\cS}{\norm{\wat\sigma}{H^{-1/2}(\cS)}}
  \qquad \text{ and }\qquad 
  \norm{\jump{\ttau\cdot\n}}{1/2,\cS} := \sup_{\wat u\in H^{1/2}(\cS)}
  \frac{\dual{\wat u}{\ttau\cdot\n}_\cS}{\norm{\wat u}{H^{1/2}(\cS)}}.
\end{align*}
Here, $(\ttau\cdot\n)|_\el := \ttau|_\el\cdot\n_{\el}$ on $\partial\el$.
We will need the following estimates.
\begin{lemma}\label{lem:norms}
  There is a constant $C(\TT)>0$ which only depends on $\TT$ such that
  \begin{align}
    \norm{\jump{v\n}}{1/2,\cS}
    &\leq \norm{v}{H^1(\TT)} \quad\text{ for all }
    v\in H^1(\TT),\label{eq:jump:v}\\
    \norm{\jump{\ttau\cdot\n}}{1/2,\cS}
    &\leq \norm{\ttau}{\HH(\div,\TT)} \quad\text{ for all }
    \ttau\in \HH(\div,\TT)\label{eq:jump:tau}.
  \end{align}
  Furthermore, there holds
  \begin{align}
    \norm{\wat u}{H^{1/2}(\Gamma)}
    &\leq \norm{\wat u}{H^{1/2}(\cS)} \quad\text{ for all }
    \wat u\in H^{1/2}(\cS),\label{eq:trace:u}\\
    \norm{\wat\sigma}{H^{-1/2}(\Gamma)}
    &\leq \norm{\wat\sigma}{H^{1/2}(\cS)} \quad\text{ for all }
    \wat\sigma\in H^{-1/2}(\cS).\label{eq:trace:sigma}
  \end{align}
\end{lemma}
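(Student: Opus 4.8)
The plan is to derive all four estimates directly from the definitions, using only element-wise Green identities and the elementary fact that a norm defined as an infimum can only decrease when its constraint set is enlarged. In fact each inequality comes out with constant $1$, so the factor $C(\TT)$ is not actually needed.

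First I would prove \eqref{eq:jump:v}. Fix $v\in H^1(\TT)$ and an arbitrary $\wat\sigma\in H^{-1/2}(\cS)$, and let $\qq\in\HH(\div,\Omega)$ be any representative with $\wat\sigma|_{\partial\el}=\qq\cdot\n_\el$. Applying the Green identity $\dual{v}{\qq\cdot\n_\el}_{\partial\el}=\vdual{\nabla v}{\qq}_\el+\vdual{v}{\div\qq}_\el$ on each $\el\in\TT$ and summing gives $\dual{v}{\wat\sigma}_\cS=\vdual{\pwnabla v}{\qq}_\Omega+\vdual{v}{\div\qq}_\Omega$. A Cauchy--Schwarz estimate, first in $L_2(\Omega)$ and then in $\R^2$ to recombine the two terms, bounds this by $\norm{v}{H^1(\TT)}\norm{\qq}{\HH(\div,\Omega)}$. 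Taking the infimum over all representatives $\qq$ replaces $\norm{\qq}{\HH(\div,\Omega)}$ by $\norm{\wat\sigma}{H^{-1/2}(\cS)}$, and dividing by the latter and taking the supremum over $\wat\sigma$ yields the claim. Estimate \eqref{eq:jump:tau} is entirely symmetric: I would now test with $\wat u\in H^{1/2}(\cS)$, pick a representative $w\in H^1(\Omega)$ with $\wat u|_{\partial\el}=w|_{\partial\el}$, apply $\dual{w}{\ttau\cdot\n_\el}_{\partial\el}=\vdual{\nabla w}{\ttau}_\el+\vdual{w}{\div\ttau}_\el$ element-wise, and since $w$ is globally $H^1$ the sum collapses to $\vdual{\nabla w}{\ttau}_\Omega+\vdual{w}{\pwdiv\ttau}_\Omega\le\norm{w}{H^1(\Omega)}\norm{\ttau}{\HH(\div,\TT)}$; the infimum over $w$ then produces $\norm{\wat u}{H^{1/2}(\cS)}$ on the right.

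For \eqref{eq:trace:u} I would argue purely at the level of the defining infima. If $\wat u\in H^{1/2}(\cS)$ has representative $w\in H^1(\Omega)$, then $w|_\Gamma=\wat u|_\Gamma$, so every such $w$ is an admissible competitor in the canonical trace norm $\norm{\wat u}{H^{1/2}(\Gamma)}=\inf\{\norm{z}{H^1(\Omega)}:z|_\Gamma=\wat u|_\Gamma\}$. Since the constraint defining $\norm{\cdot}{H^{1/2}(\cS)}$ (agreement on \emph{all} element boundaries) is strictly stronger than the one defining $\norm{\cdot}{H^{1/2}(\Gamma)}$ (agreement on $\Gamma$ only), the admissible set for the former is contained in that for the latter, so its infimum is larger, giving the inequality. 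For \eqref{eq:trace:sigma} the norm on $\Gamma$ is a dual norm rather than a trace norm, so I would instead combine duality with the global Green identity: for $\wat\sigma\in H^{-1/2}(\cS)$ with representative $\qq$ and any $g\in H^{1/2}(\Gamma)$ lifted to $w\in H^1(\Omega)$, Green's formula gives $\dual{g}{\wat\sigma}_\Gamma=\vdual{\nabla w}{\qq}_\Omega+\vdual{w}{\div\qq}_\Omega\le\norm{w}{H^1(\Omega)}\norm{\qq}{\HH(\div,\Omega)}$; minimizing over lifts $w$ and then over representatives $\qq$ turns the right-hand side into $\norm{g}{H^{1/2}(\Gamma)}\,\norm{\wat\sigma}{H^{-1/2}(\cS)}$, and dividing by $\norm{g}{H^{1/2}(\Gamma)}$ and taking the supremum over $g$ closes the argument.

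The only genuinely delicate point is the functional-analytic bookkeeping rather than the estimates themselves: I must make sure the Green identity $\dual{z}{\qq\cdot\n_\el}_{\partial\el}=\vdual{\nabla z}{\qq}_\el+\vdual{z}{\div\qq}_\el$ is valid on each Lipschitz element, that the extended $L_2$ duality pairing $\dual{\cdot}{\cdot}_{\partial\el}$ used throughout is exactly the one entering the definitions of the skeleton spaces, and that the boundary trace of a representative $\qq$ or $w$ restricted to $\Gamma$ really reproduces $\wat\sigma|_\Gamma$, respectively $\wat u|_\Gamma$, independently of the chosen representative. Once these compatibilities are in place, everything reduces to Cauchy--Schwarz and the monotonicity of infima under tightening of constraints, and no mesh-dependent constant enters.
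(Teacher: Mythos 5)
Your proof is correct and follows essentially the same route as the paper: element-wise integration by parts plus Cauchy--Schwarz for \eqref{eq:jump:v}--\eqref{eq:jump:tau}, monotonicity of the defining infima under enlargement of the constraint set for \eqref{eq:trace:u}, and the normal-trace bound $\norm{\qq\cdot\n_\Omega}{H^{-1/2}(\Gamma)}\leq\norm{\qq}{\HH(\div,\Omega)}$ for \eqref{eq:trace:sigma}, which you re-derive via duality and the global Green identity instead of citing it. The paper's proof merely states these steps with references, so your write-up supplies the same argument in full detail, including the correct observation that every constant is $1$.
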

\begin{proof}
  The estimates~\eqref{eq:jump:v}--\eqref{eq:jump:tau} follow straightforwardly by integration
  by parts, e.g., cf.~\cite[Section~4.4]{dg11} for~\eqref{eq:jump:tau}.
  The estimate~\eqref{eq:trace:u}
  follows by definition of the norms in $H^{1/2}(\Gamma)$ and $H^{1/2}(\cS)$,
  \begin{align*}
    \norm{\wat u}{H^{1/2}(\Gamma)} = \inf \big\{ \norm{w}{H^1(\Gamma)} : w\in H^1(\Omega)
    \text{ such that } \wat u = \gamma_{\partial\Omega}w \big\} \leq \norm{\wat u}{H^{1/2}(\cS)}.
  \end{align*}
  The estimate~\eqref{eq:trace:sigma} follows the same way,
  using that $H^{-1/2}(\Gamma)$ is equivalently described as the space of normal components on $\Gamma$ of
  functions in $\HH(\div,\Omega)$, and that
  $\norm{\qq\cdot\n_\Omega}{H^{-1/2}(\Gamma)}\leq\norm{\qq}{\HH(\div,\Omega)}$,
  cf.~\cite[Cor.~2.8]{GiraultR_86_NS}.
\end{proof}
\subsection{Boundary integral operators}
In order to incorporate the PDE given in the exterior domain $\Omega^c$, the classical boundary
integral operators will be used. The fundamental solution
\begin{align*}
  G(z) :=
    \begin{cases}
    -\frac{1}{2\pi}\log\abs{z}\quad&\text{ for } d=2,\\
    \frac{1}{4\pi}\frac{1}{\abs{z}}\quad&\text{ for } d>2,
  \end{cases}
\end{align*}
of the Laplacian gives rise to the two potential operators $\slp$ and $\dlp$ defined by
\begin{align*}
  \slp\phi(x) := \int_\Gamma G(x-y)\phi(y)\,ds_y,\quad \text{ and }\quad
  \dlp v(x) := \int_\Gamma \partial_{\n_\Omega(y)}G(x-y)v(y)\,ds_y
  \quad\text{ for } x\in \R^d\setminus\Gamma.
\end{align*}
Then, boundary integral operators are defined as
$\slo := \trace_\Omega \slp$ (single layer operator) and $\dlo := 1/2 + \trace_\Omega\dlp$
(double layer operator) with adjoint $\adlo$. The operators $\slp: H^{-1/2}(\Gamma)\rightarrow H^1(\Omega)$,
$\slo: H^{-1/2}(\Gamma)\rightarrow H^{1/2}(\Gamma)$,
$\dlo: H^{1/2}(\Gamma)\rightarrow H^{1/2}(\Gamma)$, and $\adlo:H^{-1/2}(\Gamma)\rightarrow H^{-1/2}(\Gamma)$
are bounded, and there holds the representation formula
\begin{align}\label{eq:representationformula}
  \slo( \partial_{\n_\Omega} u^c ) + (1/2-\dlo)(u^c|_\Gamma) = 0
\end{align}
for solutions of the exterior PDE~\eqref{eq:tp:b}.
We refer to~\cite{Costabel_88_SIAM,HsiaoW_08_BIE,McLean_00_ES,Nedelec_82_IEOT}
for proofs and more details regarding boundary integral equations and the above operators. 
\subsection{Nonsymmetric coupling with ultra-weak formulation and main results}\label{section:results}
The trial space of our variational formulations will be
$U(\TT):=\LL_2(\Omega)\times L_2(\Omega)\times H^{1/2}(\cS)\times H^{-1/2}(\cS)$.
This space is a Hilbert space with norm
\begin{align*}
  \norm{(\ssigma,u,\wat u,\wat\sigma)}{U(\TT)}^2 :=
  \norm{\ssigma}{\LL_2(\Omega)}^2 + \norm{u}{L_2(\Omega)}^2 + \norm{\wat u}{H^{1/2}(\cS)}^2
  + \norm{\wat\sigma}{H^{-1/2}(\cS)}^2.
\end{align*}
In addition, we will need the space
$U(\Omega):=\LL_2(\Omega)\times L_2(\Omega)\times H^{1/2}(\Gamma)\times H^{-1/2}(\Gamma)$,
which is a Hilbert space with norm
\begin{align*}
  \norm{(\ssigma,u,\wat u,\wat\sigma)}{U(\Omega)}^2 &:=
  \norm{\ssigma}{\LL_2(\Omega)}^2 + \norm{u}{L_2(\Omega)}^2 + \norm{\wat u}{H^{1/2}(\Gamma)}^2
  + \norm{\wat\sigma}{H^{-1/2}(\Gamma)}^2.
\end{align*}
Note that the canonical restrictions of $\wat u$ and $\wat\sigma$ show that
$\uu\in U(\TT)$ can be viewed as an element of $U(\Omega)$. Using this restriction, we can regard
$U(\TT)$ as a subspace of $U(\Omega)$. However, $\norm{\cdot}{U(\Omega)}$ is only a seminorm on $U(\TT)$.
The test space of our formulation will be $V(\TT) := H^1(\TT)\times \HH(\div,\TT)\times H^{-1/2}(\Gamma)$,
being a Hilbert space with norm
\begin{align*}
  \norm{\vv}{V(\TT)}^2 := \norm{v}{H^1(\TT)}^2 + \norm{\ttau}{\HH(\div,\TT)}^2
  + \norm{\psi}{H^{-1/2}(\Gamma)}^2.
\end{align*}
In addition, we will need the space $V(\Omega) := H^1(\Omega)\times \HH(\div,\Omega)\times H^{-1/2}(\Gamma)$,
which is a Hilbert space with norm
\begin{align*}
  \norm{\vv}{V(\Omega)}^2 := \norm{v}{H^1(\Omega)}^2 + \norm{\ttau}{\HH(\div,\Omega)}^2
  + \norm{\psi}{H^{-1/2}(\Gamma)}^2.
\end{align*}
Note that $V(\Omega)\subset V(\TT)$.
The variational formulation that we will analyze is the following Johnson-N\'ed\'elec type coupling:
find $\uu := (\ssigma,u,\wat u,\wat\sigma)$ such that
\begin{subequations}\label{eq:dfb}
\begin{align}
  \vdual{\ssigma}{\pwnabla v}_\Omega - \dual{\wat\sigma}{\jump{v}}_\cS
   &= \vdual{f}{v}_\Omega\label{eq:dfb:1}\\
   \vdual{\ssigma}{\ttau}_\Omega + \vdual{u}{\pwdiv\ttau}_\Omega - \dual{\wat u}{\jump{\ttau\cdot\n}}_\cS
   &= 0\label{eq:dfb:2}\\
  \dual{\slo \wat\sigma}{\psi}_\Gamma + \dual{(1/2-\dlo)\wat u}{\psi}_\Gamma 
  &= \dual{(1/2-\dlo)u_0+\slo\phi_0}{\psi}_\Gamma\label{eq:dfb:3}
\end{align}
\end{subequations}
for appropriate test functions $\vv:=(v,\ttau,\psi)$.
The equations~\eqref{eq:dfb:1}--~\eqref{eq:dfb:2}
are obtained by treating the interior PDE~\eqref{eq:tp:a} as in
the DPG-finite element method, cf.~\cite{dg11}, i.e., writing it as a first order system, testing
with appropriate functions, integrating by parts piecewise, and replacing the appearing boundary terms
by new unknowns $\wat u$ and $\wat\sigma$. These new unknowns
already involve the interior trace and normal derivative of $u$ on $\Gamma$,
which are coupled to the exterior problem by using the interface
conditions~\eqref{eq:tp:c}--\eqref{eq:tp:d} in the representation formula~\eqref{eq:representationformula}.
In contrast to that, in the classical nonsymmetric coupling, cf.~\eqref{eq:jn} below, the unknowns are
$u\in H^1(\Omega)$ and $\phi\in H^{-1/2}(\Gamma)$, where $\phi$ is the normal derivative of $u^c$ on $\Gamma$.

The bilinear form on the left-hand side of~\eqref{eq:dfb} will be called $b(\uu,\vv)$, and
the linear form on the right-hand side will be called $\ell(\vv)$. We will use two different formulations
which differ in the underlying spaces; the one we actually analyze and solve numerically is
\begin{align}\label{eq:b:1}
  \text{find } \uu\in U(\TT)\text{ such that } b(\uu,\vv)=\ell(\vv) \text{ for all } \vv\in V(\TT).
\end{align}
The second one is only of theoretical interest and will be needed in the proofs of the main
theorems, it is
\begin{align}\label{eq:b:2}
  \text{find } \uu\in U(\Omega)\text{ such that } b(\uu,\vv)=\ell(\vv) \text{ for all } \vv\in V(\Omega).
\end{align}
The first result of this work states unique solvability and stability
of the variational formulation~\eqref{eq:b:1}. The proof will be given in Section~\ref{sec:proofs} below.
\begin{theorem}\label{thm:solve}
  For given $f\in L_2(\Omega)$, $u_0\in H^{1/2}(\Gamma)$, and $\phi_0\in H^{-1/2}(\Gamma)$ and given
  partition $\TT$ of $\Omega$, the variational formulation~\eqref{eq:b:1}
  has a unique solution $(\ssigma,u,\wat u,\wat\sigma)\in U(\TT)$. Furthermore,
  \begin{align*}
    \norm{(\ssigma,u,\wat u,\wat\sigma)}{U(\Omega)}
    \lesssim \norm{f}{L_2(\Omega)} + \norm{u_0}{H^{1/2}(\Gamma)}
    + \norm{\phi_0}{H^{-1/2}(\Gamma)}.
  \end{align*}
  The hidden constant in $\lesssim$ only depends on $\Omega$.
\end{theorem}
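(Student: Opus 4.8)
The plan is to cast \eqref{eq:b:1} into the abstract DPG framework and invoke Lemma~\ref{lem:dpg}: once we know that the operator $B:U(\TT)\to V(\TT)'$ induced by $b$ is a bounded bijection, unique solvability of \eqref{eq:b:1} and the stability estimate $\norm{\uu}{U(\TT),\opt}\le\norm{\ell}{V(\TT)'}$ follow at once, and $\norm{\ell}{V(\TT)'}$ is controlled by the data norms through the boundedness of the right-hand side. Boundedness of $B$ (equivalently of $b$) is routine: it follows from Cauchy--Schwarz together with the jump estimates \eqref{eq:jump:v}--\eqref{eq:jump:tau}, the trace estimates \eqref{eq:trace:u}--\eqref{eq:trace:sigma}, and the mapping properties of the operators $\slo$ and $\dlo$. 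The entire difficulty is therefore concentrated in the \emph{bijectivity} of $B$.

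To prove bijectivity I would use a Babu\v{s}ka-type criterion for reflexive spaces: it suffices to establish an inf-sup condition for $b$ on $U(\TT)\times V(\TT)$ together with injectivity of the adjoint, i.e.\ that $b(\uu,\vv)=0$ for all $\uu\in U(\TT)$ forces $\vv=0$. The latter is precisely the well-posedness of the adjoint problem, which is the purpose of Section~\ref{section:adjoint} and which I expect to be the main obstacle: because the double-layer operator $\dlo$ is not compact on general Lipschitz (e.g.\ polyhedral) boundaries, this step cannot rely on compactness and must instead follow the argument of Sayas for the Johnson--N\'ed\'elec coupling, exploiting the ellipticity of the single-layer operator $\slo$ (whence the scaling condition $\diam(\Omega)<1$ for $d=2$).

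The link between the broken formulation \eqref{eq:b:1} and the conforming one \eqref{eq:b:2} is what makes both the inf-sup condition and the final stability estimate tractable. First I would establish that the conforming operator $B_\Omega:U(\Omega)\to V(\Omega)'$ is an isomorphism with constants depending only on $\Omega$; this is the unbroken Johnson--N\'ed\'elec analysis referred to above. Injectivity of $B$ on $U(\TT)$ then follows by testing with $\vv\in V(\Omega)\subset V(\TT)$, for which the jump terms $\jump{v}$ and $\jump{\ttau\cdot\n}$ vanish: this reduces the homogeneous broken equation to the homogeneous conforming one and yields $\ssigma=0$, $u=0$ together with vanishing $\Gamma$-traces of $\wat u,\wat\sigma$; the remaining skeleton components are then killed by the non-degeneracy of the jump pairings $\dual{\cdot}{\jump{v}}_\cS$ and $\dual{\wat u}{\jump{\cdot}}_\cS$. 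The inf-sup bound for $B$ is built in the same spirit, constructing a broken test function from the conforming one supplied by the isomorphism $B_\Omega$ and from realizations of the skeleton jumps, with the jump estimates of Lemma~\ref{lem:norms} controlling the $V(\TT)$-norm of the construction.

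Finally, the asserted stability in the $\norm{\cdot}{U(\Omega)}$-seminorm with an $\Omega$-only constant does \emph{not} come from the optimal-norm equivalence of Lemma~\ref{lem:basic}, whose constant would depend on $\TT$; instead it comes again through \eqref{eq:b:2}. Since $V(\Omega)\subset V(\TT)$ and the jump terms vanish on $V(\Omega)$, the solution $\uu\in U(\TT)$ of \eqref{eq:b:1}, viewed as an element of $U(\Omega)$, solves \eqref{eq:b:2}. The $\Omega$-dependent bound on $B_\Omega^{-1}$ then gives $\norm{\uu}{U(\Omega)}\lesssim\norm{\ell}{V(\Omega)'}\lesssim\norm{f}{L_2(\Omega)}+\norm{u_0}{H^{1/2}(\Gamma)}+\norm{\phi_0}{H^{-1/2}(\Gamma)}$ with constant independent of the partition, which is exactly the claim.
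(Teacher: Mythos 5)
Your overall architecture coincides with the paper's in most respects: boundedness as in Lemma~\ref{lem:stab}, the hard analytic content delegated to the Sayas-type Johnson--N\'ed\'elec and adjoint (Bielak--MacCamy) analyses, and---correctly---the $\TT$-independent stability constant extracted through the conforming problem~\eqref{eq:b:2} rather than from the open-mapping constant of Lemma~\ref{lem:basic}. Your final step (the restriction of the broken solution solves~\eqref{eq:b:2} since conforming test functions only see the $\Gamma$-parts of $\wat u,\wat\sigma$, then apply the bound on $B_\Omega^{-1}$) is a legitimate and slightly more direct variant of the paper's argument, which instead runs through the optimal test seminorm, Lemma~\ref{lem:adjoint:jn}, and the identity~\eqref{lem:basic:norms}.

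The genuine gap is in your bijectivity step. You replace the paper's route (injectivity by Lemma~\ref{lem:definite} via the equivalence Lemma~\ref{lem:equi}, \emph{surjectivity} by the explicit construction of Lemma~\ref{lem:surj}, then the open mapping theorem in Lemma~\ref{lem:basic}) by ``inf-sup on $U(\TT)\times V(\TT)$ plus adjoint injectivity'', and you propose to prove the inf-sup by adding to the conforming test function supplied by $B_\Omega$ some ``realizations of the skeleton jumps'', with Lemma~\ref{lem:norms} controlling the construction. But Lemma~\ref{lem:norms} only provides the \emph{upper} bounds $\norm{\jump{v\n}}{1/2,\cS}\leq\norm{v}{H^1(\TT)}$ and $\norm{\jump{\ttau\cdot\n}}{1/2,\cS}\leq\norm{\ttau}{\HH(\div,\TT)}$. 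What your construction actually needs is the reverse duality statement: for every $\wat\sigma\in H^{-1/2}(\cS)$ there exists $v\in H^1(\TT)$ with $\dual{\wat\sigma}{\jump{v}}_\cS\gtrsim\norm{\wat\sigma}{H^{-1/2}(\cS)}\,\norm{v}{H^1(\TT)}$, and the analogue for $\wat u$. Elementwise optimization of the pairing only yields the product quantity $\bigl(\sum_{\el\in\TT}\norm{\wat\sigma|_{\partial\el}}{H^{-1/2}(\partial\el)}^2\bigr)^{1/2}$, and identifying this with the global minimal $\HH(\div,\Omega)$-extension norm defining $\norm{\cdot}{H^{-1/2}(\cS)}$ is a nontrivial theorem about broken trace spaces (of Carstensen--Demkowicz--Gopalakrishnan type) that appears nowhere in the paper. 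Note also that the easy non-degeneracy of the jump pairings---which you correctly invoke for injectivity of $B$---gives you dense range statements but not the closed-range bound required by the Babu\v{s}ka--Ne\v{c}as criterion; in infinite dimensions, injectivity of $B$ and of its adjoint do not imply bijectivity. So as written your plan cannot be completed with the tools you cite. The repair is either to import the broken-trace duality theorem as an extra ingredient, or---as the paper does---to abandon the quantitative inf-sup on the broken pair altogether and prove surjectivity of $B$ directly: Riesz-represent $\ell\in V(\TT)'$, solve an auxiliary Johnson--N\'ed\'elec problem with data assembled from the representers, and exhibit a preimage (Lemma~\ref{lem:surj}); bijectivity then follows from the open mapping theorem, and the $\TT$-independent estimate still comes out of the conforming problem exactly as in your last paragraph.
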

The second main result of this work is the following quasi optimality result of the Petrov-Galerkin method
with optimal test functions associated with the norm $\norm{\cdot}{V(\TT)}$.
\begin{theorem}\label{thm:opt}
  Suppose that $U_\hp(\TT)\subset U(\TT)$ is a discrete subspace.
  Then, the discrete formulation~\eqref{eq:dpg}, where $V:=(V(\TT),\norm{\cdot}{V(\TT)})$,
  has a unique solution $(\ssigma_\hp,u_\hp,\wat u_\hp,\wat\sigma_\hp)\in U_\hp(\TT)$.
  Furthermore,
  \begin{align*}
    \norm{(\ssigma-\ssigma_\hp,u-u_\hp,&\wat u-\wat u_\hp,\wat\sigma-\wat\sigma_\hp)}{U(\Omega)}
    \lesssim\\
    &\inf_{(\ssigma_\hp',u_\hp',\wat u_\hp',\wat\sigma_\hp')\in U_\hp(\TT)}
    \norm{(\ssigma-\ssigma_\hp',u-u_\hp',\wat u-\wat u_\hp',\wat\sigma-\wat\sigma_\hp')}{U(\TT)},
  \end{align*}
  where $(\ssigma,u,\wat u,\wat\sigma)\in U(\TT)$ is the exact solution of~\eqref{eq:b:1}.
  The hidden constant in $\lesssim$ only depends on $\Omega$.
\end{theorem}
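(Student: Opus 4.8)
The plan is to feed the operator $B:U(\TT)\to V(\TT)'$, $B\uu:=b(\uu,\cdot)$, into the abstract DPG machinery and then to replace the resulting optimal-norm estimate by a $\TT$-independent one in the weaker norm $\norm{\cdot}{U(\Omega)}$. I would first record that $B$ is a bounded bijection, which is exactly the well-posedness underlying Theorem~\ref{thm:solve}; then the hypotheses of Lemma~\ref{lem:basic} hold and Lemma~\ref{lem:dpg} applies. This immediately gives unique solvability of~\eqref{eq:dpg} and, writing $\uu=(\ssigma,u,\wat u,\wat\sigma)$ and $\uu_\hp=(\ssigma_\hp,u_\hp,\wat u_\hp,\wat\sigma_\hp)$, the best-approximation identity~\eqref{lem:dpg:opt},
\begin{align*}
  \norm{\uu-\uu_\hp}{U(\TT),\opt}=\inf_{\uu'_\hp\in U_\hp(\TT)}\norm{\uu-\uu'_\hp}{U(\TT),\opt}.
\end{align*}
It then suffices to bound the optimal norm below by $\norm{\cdot}{U(\Omega)}$ and above by $\norm{\cdot}{U(\TT)}$, each with a constant depending only on $\Omega$.

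The upper bound is just boundedness of $b$ on $U(\TT)\times V(\TT)$: combining the jump estimates~\eqref{eq:jump:v}--\eqref{eq:jump:tau} with the mapping boundedness of $\slo$ and $\dlo$ shows that $b$ is bounded with a $\TT$-independent constant, whence $\norm{\boldsymbol{w}}{U(\TT),\opt}=\sup_{\vv\in V(\TT)}b(\boldsymbol{w},\vv)/\norm{\vv}{V(\TT)}\lesssim\norm{\boldsymbol{w}}{U(\TT)}$ for every $\boldsymbol{w}\in U(\TT)$. Applied to $\boldsymbol{w}=\uu-\uu'_\hp$, this turns the right-hand side of the identity above into $\inf_{\uu'_\hp}\norm{\uu-\uu'_\hp}{U(\TT)}$, as required.

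The crux is the lower bound $\norm{\boldsymbol{w}}{U(\Omega)}\lesssim\norm{\boldsymbol{w}}{U(\TT),\opt}$ for $\boldsymbol{w}\in U(\TT)$. Here the key point is that the supremum defining the optimal norm may be restricted to the subspace $V(\Omega)\subset V(\TT)$. For $\vv\in V(\Omega)$ the piecewise norms collapse to global ones, so $\norm{\vv}{V(\TT)}=\norm{\vv}{V(\Omega)}$; moreover, since functions in $H^1(\Omega)$ and $\HH(\div,\Omega)$ carry no interior jumps, the skeleton pairings in $b$ reduce to pairings on $\Gamma$, so that $b(\boldsymbol{w},\vv)=b(\boldsymbol{w}|_\Omega,\vv)$ with $\boldsymbol{w}|_\Omega\in U(\Omega)$ the canonical restriction. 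Hence
\begin{align*}
  \norm{\boldsymbol{w}}{U(\TT),\opt}\ge\sup_{\vv\in V(\Omega)}\frac{b(\boldsymbol{w}|_\Omega,\vv)}{\norm{\vv}{V(\Omega)}}\gtrsim\norm{\boldsymbol{w}|_\Omega}{U(\Omega)},
\end{align*}
the last step being the inf-sup stability of the $\Omega$-formulation~\eqref{eq:b:2}. Since $U(\Omega)$ and $V(\Omega)$ do not depend on $\TT$, this constant depends only on $\Omega$.

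Chaining the lower bound, the identity, and the upper bound yields
\begin{align*}
  \norm{\uu-\uu_\hp}{U(\Omega)}\lesssim\norm{\uu-\uu_\hp}{U(\TT),\opt}=\inf_{\uu'_\hp}\norm{\uu-\uu'_\hp}{U(\TT),\opt}\lesssim\inf_{\uu'_\hp}\norm{\uu-\uu'_\hp}{U(\TT)},
\end{align*}
which is the assertion. The main obstacle is the inf-sup stability of~\eqref{eq:b:2} used in the lower bound; this is precisely the well-posedness of the $\Omega$-formulation proved via the adjoint analysis of Section~\ref{section:adjoint}, which underlies Theorem~\ref{thm:solve} and which I would treat as available here.
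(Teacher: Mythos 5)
Your proposal is correct and follows essentially the same route as the paper's proof: invoke the abstract DPG result (Lemma~\ref{lem:dpg}, whose hypotheses hold by Lemmas~\ref{lem:stab}, \ref{lem:definite} and~\ref{lem:surj}) and then establish the two-sided bound $\norm{\uu}{U(\Omega)} \lesssim \norm{\uu}{U(\TT),\opt} \lesssim \norm{\uu}{U(\TT)}$, where the crux --- exactly as you identify --- is the lower bound via restriction of the supremum to $V(\Omega)\subset V(\TT)$ and the $\TT$-independent well-posedness of~\eqref{eq:b:2}, which the paper obtains from Lemmas~\ref{lem:definite} and~\ref{lem:adjoint:jn} through the Babu\v{s}ka-Brezzi theory. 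The only (harmless) deviation is your upper bound, which you deduce directly from the boundedness estimate~\eqref{eq:stab:1} rather than, as the paper does, from the characterization of the optimal test norm $\norm{\cdot}{V(\TT),\opt}$ together with Lemma~\ref{lem:testnorms:equi:jn} and the identity~\eqref{lem:basic:norms}; the two arguments are equivalent and both give a constant depending only on $\Omega$.
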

\begin{remark}
  Note that the norm $\norm{\cdot}{U(\Omega)}$
  in the stability estimate and on the left-hand side of the quasi-optimality
  is a weaker norm in $U(\TT)$.
  This norm does not control the parts of $\wat u$ and $\wat \sigma$ on the inner parts
  of the skeleton $\cS$, i.e., the parts which are not on the boundary $\Gamma$.
  However, we have full control of 
  the Cauchy data $\wat u$ and $\wat \sigma$ on the boundary $\Gamma$,
  which are the only ingredients to solve the exterior problem.
\end{remark}
For the proofs of Theorems~\ref{thm:solve} and~\ref{thm:opt} we will apply the results of
Lemmas~\ref{lem:basic} and~\ref{lem:dpg}, hence we have to check the assumptions of
Lemma~\ref{lem:basic}. The boundedness of the bilinear form is shown in Lemma~\ref{lem:stab}.
The bijectivity of the operator that corresponds to the bilinear form will be
proved in Section~\ref{sec:proofs} below.
In a first step, this will yield the results of Lemma~\ref{lem:dpg}, i.e., stability
and quasi-optimality in the norm $\norm{\cdot}{U(\TT),\opt}$ defined in Lemma~\ref{lem:basic}.
To obtain the results in the main theorems, it remains to relate the
norm $\norm{\cdot}{U(\TT),\opt}$
to the norms $\norm{\cdot}{U(\TT)}$ and $\norm{\cdot}{U(\Omega)}$.
This will be done by characterizing the optimal test norms 
$\norm{\cdot}{V(\TT),\opt}$ (optimal to $\norm{\cdot}{U(\TT)}$)
and 
$\norm{\cdot}{V(\TT),\alpha}$ (optimal to $\norm{\cdot}{U(\Omega)}$)
and by relating them to the norm $\norm{\cdot}{V(\TT)}$ (optimal to $\norm{\cdot}{U(\TT),\opt}$).
These norm equivalences are the topic of Section~\ref{section:norms}.
\section{Technical results}\label{section:adjoint}
We start by showing the boundedness of the bilinear form $b$.
\begin{lemma}\label{lem:stab}
  It holds that
  \begin{align}
    b(\uu,\vv) &\lesssim
    \norm{\uu}{U(\TT)}\norm{\vv}{V(\TT)} \quad \text{ for all } \uu\in U(\TT), \vv\in V(\TT),\label{eq:stab:1}\\
    b(\uu,\vv) &\lesssim
    \norm{\uu}{U(\Omega)}\norm{\vv}{V(\Omega)} \quad \text{ for all } \uu\in U(\Omega), \vv\in V(\Omega).
    \label{eq:stab:2}
  \end{align}
  The hidden constant only depends on $\Omega$.
\end{lemma}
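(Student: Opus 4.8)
The plan is to estimate the seven individual terms of $b(\uu,\vv)$ separately and then combine them by a discrete Cauchy--Schwarz inequality over the components. For $\uu=(\ssigma,u,\wat u,\wat\sigma)$ and $\vv=(v,\ttau,\psi)$ the bilinear form reads
\begin{align*}
  b(\uu,\vv) &= \vdual{\ssigma}{\pwnabla v}_\Omega - \dual{\wat\sigma}{\jump{v}}_\cS
  + \vdual{\ssigma}{\ttau}_\Omega + \vdual{u}{\pwdiv\ttau}_\Omega \\
  &\quad - \dual{\wat u}{\jump{\ttau\cdot\n}}_\cS
  + \dual{\slo\wat\sigma}{\psi}_\Gamma + \dual{(1/2-\dlo)\wat u}{\psi}_\Gamma.
\end{align*}
To prove \eqref{eq:stab:1}, the three volume pairings are handled directly by Cauchy--Schwarz in $\LL_2(\Omega)$ together with the trivial bounds $\norm{\pwnabla v}{\LL_2(\Omega)}\leq\norm{v}{H^1(\TT)}$ and $\norm{\pwdiv\ttau}{L_2(\Omega)}\leq\norm{\ttau}{\HH(\div,\TT)}$. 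For the two skeleton pairings I would invoke the very definition of the jump norms together with \eqref{eq:jump:v}--\eqref{eq:jump:tau}, which give
\begin{align*}
  \dual{\wat\sigma}{\jump{v}}_\cS \leq \norm{v}{H^1(\TT)}\,\norm{\wat\sigma}{H^{-1/2}(\cS)},
  \qquad
  \dual{\wat u}{\jump{\ttau\cdot\n}}_\cS \leq \norm{\ttau}{\HH(\div,\TT)}\,\norm{\wat u}{H^{1/2}(\cS)}.
\end{align*}

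The two boundary pairings are controlled by the stated mapping properties of the integral operators: since $\slo\colon H^{-1/2}(\Gamma)\to H^{1/2}(\Gamma)$ and $1/2-\dlo\colon H^{1/2}(\Gamma)\to H^{1/2}(\Gamma)$ are bounded, duality of $H^{1/2}(\Gamma)$ and $H^{-1/2}(\Gamma)$ bounds them by $\norm{\wat\sigma}{H^{-1/2}(\Gamma)}\norm{\psi}{H^{-1/2}(\Gamma)}$ and $\norm{\wat u}{H^{1/2}(\Gamma)}\norm{\psi}{H^{-1/2}(\Gamma)}$ respectively; here the trace estimates \eqref{eq:trace:u}--\eqref{eq:trace:sigma} are used to pass from the $\Gamma$-restrictions of $\wat u,\wat\sigma$ to their (larger) $\cS$-norms. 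Summing the seven contributions and applying Cauchy--Schwarz over the four trial and three test components yields \eqref{eq:stab:1}, with a constant depending only on $\Omega$ (through the operator norms of $\slo$, $\dlo$).

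For \eqref{eq:stab:2} I would follow the same route, but now $v\in H^1(\Omega)$ and $\ttau\in\HH(\div,\Omega)$ carry no interior jumps. Hence the integration-by-parts identities $\dual{v}{\wat\sigma}_\cS=\dual{v}{\wat\sigma}_\Gamma$ and its analogue for $\ttau\cdot\n$, recorded in Section~\ref{section:math}, collapse the two skeleton pairings into genuine boundary pairings, which are then estimated by the trace theorems $\norm{v}{H^{1/2}(\Gamma)}\lesssim\norm{v}{H^1(\Omega)}$ and $\norm{\ttau\cdot\n}{H^{-1/2}(\Gamma)}\lesssim\norm{\ttau}{\HH(\div,\Omega)}$. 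All remaining terms are bounded exactly as before, now directly in the $\Gamma$-norms of $\wat u,\wat\sigma$ and without recourse to \eqref{eq:trace:u}--\eqref{eq:trace:sigma}, which gives \eqref{eq:stab:2}.

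The argument is essentially the assembly of already established bounds, so I do not expect a genuine obstacle; the two points requiring care are the correct use of the duality \emph{definitions} of the jump norms (so that the roles of $\wat u,\wat\sigma$ and of $v,\ttau$ are not inadvertently interchanged), and, for \eqref{eq:stab:2}, the reduction of the skeleton pairings to boundary pairings, which is precisely where the global continuity of $v$ and of the normal trace of $\ttau$ enters.
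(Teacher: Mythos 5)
Your proposal is correct and follows essentially the same route as the paper's proof: Cauchy--Schwarz for the volume terms, the duality definitions of the jump norms together with \eqref{eq:jump:v}--\eqref{eq:jump:tau} for the skeleton pairings, boundedness of $\slo$, $\dlo$ with \eqref{eq:trace:u}--\eqref{eq:trace:sigma} for the boundary terms, and, for \eqref{eq:stab:2}, the collapse of the skeleton pairings to $\Gamma$-pairings estimated via the trace theorems. No gaps; your version is if anything slightly more careful, since the paper's displayed bound for $\abs{\dual{\wat\sigma}{v}_\cS}$ writes $\norm{\wat\sigma}{H^{-1/2}(\Gamma)}$ where the duality definition actually yields $\norm{\wat\sigma}{H^{-1/2}(\cS)}$, as you have it.
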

\begin{proof}
  The proof of~\eqref{eq:stab:1} follows immediately by the Cauchy-Schwarz inequality
  and standard boundedness properties. Note that by definition of the norms
  $\norm{\jump{(\cdot)\n}}{1/2,\cS}$ and $\norm{\jump{(\cdot) \cdot \n}}{1/2,\cS}$
  and~\eqref{eq:jump:v} and \eqref{eq:jump:tau}, it holds that
  \begin{align*}
    \abs{\dual{\wat\sigma}{v}_\cS} &\leq \norm{\wat\sigma}{H^{-1/2}(\Gamma)} \; \norm{\jump{v\n}}{1/2,\cS}
    \leq \norm{\wat\sigma}{H^{-1/2}(\Gamma)} \; \norm{v}{H^1(\TT)},\\
    \abs{\dual{\wat u}{\ttau\cdot\n}} &\leq \norm{\jump{\ttau\cdot\n}}{1/2,\cS} \; \norm{\wat u}{H^{1/2}(\cS)}
    \leq \norm{\ttau}{\HH(\div,\TT)} \; \norm{\wat u}{H^{1/2}(\cS)}.
  \end{align*}
  Furthermore, the boundedness of the operators $\slo$, $\dlo$ and~\eqref{eq:trace:u},~\eqref{eq:trace:sigma}
  yield
  \begin{align*}
    \abs{\dual{\slo\wat\sigma}{\psi}_\Gamma} &\lesssim
    \norm{\wat\sigma}{H^{-1/2}(\Gamma)} \; \norm{\psi}{H^{-1/2}(\Gamma)}
    \leq \norm{\wat\sigma}{H^{-1/2}(\cS)} \; \norm{\psi}{H^{-1/2}(\Gamma)}\\
    \abs{\dual{(1/2-\dlo)\wat u}{\psi}_\Gamma} &\lesssim
    \norm{\wat u}{H^{1/2}(\Gamma)} \; \norm{\psi}{H^{-1/2}(\Gamma)}
    \leq \norm{\wat u}{H^{1/2}(\cS)} \; \norm{\psi}{H^{-1/2}(\Gamma)}.
  \end{align*}
  For the proof of~\eqref{eq:stab:2}, note in addition that
  \begin{align*}
    \abs{\dual{\wat\sigma}{v}_\cS} \leq
    \abs{\dual{\wat\sigma}{v}_\Gamma}
    \leq \norm{\wat\sigma}{H^{-1/2}(\Gamma)} \; \norm{v}{H^{1/2}(\Gamma)}
    \leq \norm{\wat\sigma}{H^{-1/2}(\Gamma)} \; \norm{v}{H^{1}(\Omega)}
  \end{align*}
  due to the definition of the norm $\norm{\cdot}{H^{1/2}(\Gamma)}$.
  The part $\abs{\dual{\wat u}{\ttau\cdot\n}}$ is treated the same
  way.
\end{proof}
\subsection{Johnson-N\'ed\'elec coupling}\label{section:surj}
The aim of this subsection is to show that our new formulation is equivalent to 
the classical nonsymmetric coupling.
As we will see in Section~\ref{sec:proofs}, this implies, in particular, injectivity
of the operator $B:U\to V'$ that corresponds to the bilinear form $b(\cdot,\cdot)$.

The transmission problem~\eqref{eq:tp} can be written equivalently as:
Given $(\GG,F,\rho,\lambda)\in \LL_2(\Omega)\times L_2(\Omega)
\times H^{-1/2}(\Gamma) \times H^{1/2}(\Gamma)$,
find $(u,\phi)\in H^1(\Omega)\times H^{-1/2}(\Gamma)$ such that
\begin{subequations}\label{eq:jn}
\begin{align}
  \vdual{\nabla u}{\nabla v}_\Omega - \dual{\phi}{v}_\Gamma
  &= \vdual{\GG}{\nabla v}_\Omega + \vdual{F}{v}_\Omega + \dual{\rho}{v}_\Gamma\label{eq:jn:1}\\
  \dual{(1/2-\dlo)u}{\psi}_\Gamma + \dual{\slo\phi}{\psi}_\Gamma &= \dual{\psi}{\lambda}_\Gamma\label{eq:jn:2}
\end{align}
\end{subequations}
for all $(v,\psi)\in H^1(\Omega)\times H^{-1/2}(\Gamma)$. 
Proof of unique solvability is not straightforward
as Problem~\eqref{eq:jn} is not elliptic, and was addressed recently
in~\cite{s13} and also in~\cite{affkmp13,OfS_14_FEMBEM,Steinbach_11_FEMBEM}. Following
the approach of~\cite{affkmp13}, the following stability result can be shown.
\begin{lemma}\label{lem:jn}
  For given $(\GG,F,\rho,\lambda)\in
    \LL_2(\Omega)\times L_2(\Omega) \times H^{-1/2}(\Gamma) \times H^{1/2}(\Gamma)$,
  the variational formulation~\eqref{eq:jn} has a unique solution
  $(u,\phi)\in H^1(\Omega)\times H^{-1/2}(\Gamma)$, and
  \begin{align*}
    \norm{u}{H^1(\Omega)} + \norm{\phi}{H^{-1/2}(\Gamma)} \lesssim \norm{\GG}{\LL_2(\Omega)} + 
    \norm{F}{L_2(\Omega)} + \norm{\rho}{H^{-1/2}(\Gamma)} + \norm{\lambda}{H^{1/2}(\Gamma)}.
  \end{align*}
\end{lemma}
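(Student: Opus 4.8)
The plan is to read the left-hand side of~\eqref{eq:jn} as a bounded bilinear form
\[
  a\big((u,\phi),(v,\psi)\big) := \vdual{\nabla u}{\nabla v}_\Omega - \dual{\phi}{v}_\Gamma
  + \dual{(1/2-\dlo)u}{\psi}_\Gamma + \dual{\slo\phi}{\psi}_\Gamma
\]
on the Hilbert space $X := H^1(\Omega)\times H^{-1/2}(\Gamma)$, and to obtain unique solvability together with the stated bound by establishing \emph{ellipticity} of $a$ on $X$; the Lax--Milgram lemma then delivers the solution and an a priori estimate whose constant is the reciprocal of the ellipticity constant. Boundedness of $a$ is routine, following from the Cauchy--Schwarz inequality, the trace theorem, and the mapping properties of $\slo$ and $\dlo$ recalled in Section~\ref{section:math}. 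The right-hand side is manifestly a bounded functional on $X$ with dual norm controlled by $\norm{\GG}{\LL_2(\Omega)} + \norm{F}{L_2(\Omega)} + \norm{\rho}{H^{-1/2}(\Gamma)} + \norm{\lambda}{H^{1/2}(\Gamma)}$, so the asserted stability estimate is exactly the Lax--Milgram bound.

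To get coercivity I would test with $(v,\psi)=(u,\phi)$. Using symmetry of the $L_2(\Gamma)$ pairing, a short computation collapses the boundary contributions into a single cross term,
\[
  a\big((u,\phi),(u,\phi)\big) = \norm{\nabla u}{L_2(\Omega)}^2 + \dual{\slo\phi}{\phi}_\Gamma
  - \dual{(1/2+\dlo)u}{\phi}_\Gamma.
\]
The first two summands are the favourable ones: $\norm{\nabla u}{L_2(\Omega)}^2$ controls the $H^1$-seminorm, and by ellipticity of the single-layer operator (this is where the scaling assumption $\diam(\Omega)<1$ for $d=2$ enters) there holds $\dual{\slo\phi}{\phi}_\Gamma\gtrsim\norm{\phi}{H^{-1/2}(\Gamma)}^2$. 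The indefinite cross term $\dual{(1/2+\dlo)u}{\phi}_\Gamma$ is the obstruction, and controlling it is the heart of the matter.

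The main obstacle, and the point at which I would follow~\cite{affkmp13} (and~\cite{s13,Steinbach_11_FEMBEM}), is precisely this cross term: on a general Lipschitz boundary $\dlo$ is not compact, so it cannot be dismissed as a lower-order perturbation. The decisive tool is the \emph{contraction property} of the double-layer operator in the energy induced by $\slo$. Writing $\xi := \slo^{-1}(1/2+\dlo)u\in H^{-1/2}(\Gamma)$ and applying Cauchy--Schwarz for the symmetric positive form $\dual{\slo\cdot}{\cdot}_\Gamma$, one bounds $\dual{(1/2+\dlo)u}{\phi}_\Gamma = \dual{\slo\xi}{\phi}_\Gamma \le \dual{\slo\xi}{\xi}_\Gamma^{1/2}\,\dual{\slo\phi}{\phi}_\Gamma^{1/2}$, and the contraction estimate supplies a constant $c_\dlo<1$ with $\dual{\slo\xi}{\xi}_\Gamma \le c_\dlo\,\dual{S(u|_\Gamma)}{u|_\Gamma}_\Gamma \le c_\dlo\,\norm{\nabla u}{L_2(\Omega)}^2$, where $S$ is the interior Steklov--Poincar\'e operator and $\dual{S(u|_\Gamma)}{u|_\Gamma}_\Gamma$ is the minimal (harmonic) Dirichlet energy, hence no larger than $\norm{\nabla u}{L_2(\Omega)}^2$. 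A Young inequality then absorbs the cross term into the two favourable terms with room to spare because $c_\dlo<1$, giving $a((u,\phi),(u,\phi))\gtrsim \norm{\nabla u}{L_2(\Omega)}^2 + \dual{\slo\phi}{\phi}_\Gamma$. The final step is to upgrade the $H^1$-seminorm to the full norm: this follows by a Poincar\'e--Friedrichs argument, exploiting that the coupling also controls the trace $u|_\Gamma$ (through the second equation and the $\slo$-ellipticity), so a vanishing gradient forces $u$ to be a constant that is in turn pinned down by the boundary coupling. This yields ellipticity of $a$ on all of $X$ and completes the proof; the precise bookkeeping of $c_\dlo$ and of the involved norm equivalences is exactly what is carried out in~\cite{affkmp13}.
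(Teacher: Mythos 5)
Your proposal fails at its central claim: the Johnson--N\'ed\'elec bilinear form $a=b_\jn$ is \emph{not} elliptic on $H^1(\Omega)\times H^{-1/2}(\Gamma)$, which is precisely why the paper remarks that ``Problem~\eqref{eq:jn} is not elliptic'' and why all of \cite{s13,Steinbach_11_FEMBEM,affkmp13} resort to something other than a direct Lax--Milgram argument. The obstruction is elementary and independent of any sign or operator conventions: take $(u,\phi)=(1,0)$. Each of the four terms of the quadratic form contains a factor $\nabla u=0$ or $\phi=0$ (recall $\psi=\phi$ on the diagonal), hence
\[
  a\bigl((1,0),(1,0)\bigr)
  =\vdual{\nabla 1}{\nabla 1}_\Omega-\dual{0}{1}_\Gamma
  +\dual{(1/2-\dlo)1}{0}_\Gamma+\dual{\slo 0}{0}_\Gamma=0,
\]
while $\norm{(1,0)}{H^1(\Omega)\times H^{-1/2}(\Gamma)}>0$. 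Consequently your contraction-based estimate $a((u,\phi),(u,\phi))\gtrsim\norm{\nabla u}{\LL_2(\Omega)}^2+\norm{\phi}{H^{-1/2}(\Gamma)}^2$ (which is correct, and is indeed the computational core of the cited stability theory) is already sharp: its right-hand side is a seminorm vanishing at $(1,0)$, and no ``upgrade'' to the full norm can exist. The final step you sketch --- using that ``the second equation pins down the trace'' --- draws on information obtained by testing with pairs $(v,\psi)\neq(u,\phi)$; that is an inf-sup-type argument, not a coercivity argument, and Lax--Milgram as you invoke it (with constant the reciprocal of the ellipticity constant) is simply not applicable to the unstabilized form.

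The paper's proof repairs exactly this defect by the stabilization device of \cite{affkmp13}: it adds the rank-one term $\dual{1}{(1/2-\dlo)u+\slo\phi}_\Gamma\dual{1}{(1/2-\dlo)v+\slo\psi}_\Gamma$ to $b_\jn$ (and the matching term to the right-hand side), obtaining problem~\eqref{eq:jn:stab}; it then cites \cite[Thm.~14]{affkmp13} for the fact that \eqref{eq:jn} and \eqref{eq:jn:stab} have the same solutions, \cite[Thm.~15]{affkmp13} for continuity and ellipticity of the stabilized form, and concludes with Lax--Milgram. Note that the added term is exactly what your argument lacks: since $(1/2-\dlo)1=1$ with the paper's conventions, it is strictly positive at the troublesome pair $(1,0)$, so it restores coercivity on the constant mode while an estimate like yours handles the rest. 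Your proposal could be repaired along these lines (prove ellipticity of the \emph{stabilized} form using your contraction computation, prove equivalence of the two problems, then apply Lax--Milgram), but as written it asserts and relies on a false ellipticity statement, so the gap is genuine.
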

\begin{proof}
  Denote the bilinear form on the left-hand side of~\eqref{eq:jn} as $b_\jn\left( u,\phi; v,\psi \right)$
  and the linear functional on the right-hand side as $\ell_\jn(v,\psi)$. Consider the problem of
  finding $(u,\phi)\in H^1(\Omega)\times H^{-1/2}(\Gamma)$ such that 
  \begin{align}
    \label{eq:jn:stab}
    \begin{split}
      b_\jn\left( u,\phi; v,\psi \right) +
      \dual{1}{(1/2-\dlo)u+\slo\phi}_\Gamma&\dual{1}{(1/2-\dlo)v+\slo\psi}_\Gamma\\
      &= \ell_\jn(v,\psi) + \dual{1}{\lambda}_\Gamma\dual{1}{(1/2-\dlo)v+\slo\psi}_\Gamma
    \end{split}
  \end{align}
  for all $(v,\psi)\in H^1(\Omega)\times H^{-1/2}(\Gamma)$. According to~\cite[Thm.~14]{affkmp13},
  a solution $(u,\phi)\in H^1(\Omega)\times H^{-1/2}(\Gamma)$ of~\eqref{eq:jn} also solves~\eqref{eq:jn:stab}
  and vice versa. Furthermore,~\cite[Thm.~15]{affkmp13} states that the bilinear form on the left-hand side
  of~\eqref{eq:jn:stab} is continuous and elliptic on $H^1(\Omega)\times H^{-1/2}(\Gamma)$. The
  norm of the linear functional on the right-hand side of~\eqref{eq:jn:stab} is bounded by
  \begin{align*}
    \norm{\GG}{\LL_2(\Omega)} + \norm{F}{L_2(\Omega)} + \norm{\rho}{H^{-1/2}(\Gamma)}
    + \norm{\lambda}{H^{1/2}(\Gamma)}.
  \end{align*}
  We finish the proof by application of the Lax-Milgram lemma.
\end{proof}
The next lemma shows that our new formulation~\eqref{eq:b:1} is equivalent to the classical
formulation~\eqref{eq:jn}.
\begin{lemma}\label{lem:equi}
  Let $f\in L_2(\Omega)$, $u_0\in H^{1/2}(\Gamma)$, and $\phi_0\in H^{-1/2}(\Gamma)$.
  Define $\GG=0$, $F=f$, $\rho=\phi_0$, and $\lambda=(1/2-\dlo)u_0$. Then there hold the following
  statements:
  \begin{itemize}
    \item[(i)] Suppose that $(u,\phi)\in H^1(\Omega)\times H^{-1/2}(\Gamma)$ is a solution of~\eqref{eq:jn}.
      Then $\nabla u\in \HH(\div,\Omega)$,
      and $(\nabla u, u, \trace u, \nabla u\cdot \n)\in U(\TT)$ is a solution of~\eqref{eq:b:1}
      and of~\eqref{eq:b:2}.
    \item[(ii)] Suppose that $(\ssigma, u,\wat u,\wat\sigma)\in U(\TT)$ respectively
      $(\ssigma, u,\wat u,\wat\sigma)\in U(\Omega)$ is a solution
      of~\eqref{eq:b:1}, respectively~\eqref{eq:b:2}.
      Then $(u,\wat\sigma|_\Gamma-\phi_0)\in H^1(\Omega)\times H^{-1/2}(\Gamma)$
      is a solution of~\eqref{eq:jn} and $\ssigma=\nabla u$ as well as $\wat u=u$,
      $\wat\sigma = \nabla u \cdot\n$ on $\cS$, respectively on $\Gamma$.
  \end{itemize}
\end{lemma}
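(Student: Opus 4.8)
The goal is to establish the equivalence of the new formulation~\eqref{eq:b:1} (and its theoretical variant~\eqref{eq:b:2}) with the classical Johnson-N\'ed\'elec coupling~\eqref{eq:jn}. The two implications are structurally different, so I would treat them separately, but both hinge on the \emph{same bridge}: recognizing that the unknowns $\wat u,\wat\sigma$ on the skeleton are exactly the piecewise traces and normal derivatives of an $H^1(\Omega)$-function, and that $\ssigma$ plays the role of $\nabla u$. The key technical tool throughout is piecewise integration by parts, which converts the skeleton duality pairings $\dual{\cdot}{\jump{\cdot}}_\cS$ into volume terms plus a single boundary term on $\Gamma$.

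**Direction (i): from Johnson-N\'ed\'elec to the new formulation.**
Starting from a solution $(u,\phi)$ of~\eqref{eq:jn}, I would first show $\nabla u\in\HH(\div,\Omega)$. Testing~\eqref{eq:jn:1} with $v\in C_0^\infty(\Omega)$ kills the boundary term and yields $\div\nabla u=-F=-f$ in the distributional sense; since $f\in L_2(\Omega)$, this gives $\nabla u\in\HH(\div,\Omega)$, so the tuple $(\nabla u,u,\trace u,\nabla u\cdot\n)$ genuinely lies in $U(\TT)$ (indeed in $U(\Omega)$). Then I would verify the three equations of~\eqref{eq:dfb} by substitution. Equations~\eqref{eq:dfb:1} and~\eqref{eq:dfb:2} follow from integration by parts: with $\ssigma=\nabla u$ and $\wat\sigma=\nabla u\cdot\n$, the piecewise Green's identity turns $\vdual{\ssigma}{\pwnabla v}_\Omega$ into $-\vdual{\div\nabla u}{v}+\dual{\wat\sigma}{\jump v}_\cS=\vdual{f}{v}+\dual{\wat\sigma}{\jump v}_\cS$, which rearranges to~\eqref{eq:dfb:1}; equation~\eqref{eq:dfb:2} is the analogous identity with the roles of trace and normal derivative swapped, using $\wat u=\trace u$. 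For~\eqref{eq:dfb:3} I would take the second Johnson-N\'ed\'elec equation~\eqref{eq:jn:2}, substitute $\wat u=\trace u$, $\wat\sigma=\nabla u\cdot\n=\phi+\phi_0$ (this last identity comes from comparing the boundary terms in~\eqref{eq:jn:1} against the integration-by-parts boundary term, using $\rho=\phi_0$), and rewrite $\lambda=(1/2-\dlo)u_0$ to land on the right-hand side of~\eqref{eq:dfb:3}.

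**Direction (ii): from the new formulation to Johnson-N\'ed\'elec.**
Conversely, given a solution $(\ssigma,u,\wat u,\wat\sigma)$ of~\eqref{eq:b:1} (or~\eqref{eq:b:2}), I would first extract the identifications $\ssigma=\nabla u$ and $\wat u=u$, $\wat\sigma=\nabla u\cdot\n$ on the skeleton. These come from testing~\eqref{eq:dfb:2} against $\ttau\in\HH(\div,\Omega)$ supported inside elements, and against $\ttau$ with nonvanishing normal trace, respectively: the first recovers $\ssigma=\nabla u$ weakly and forces $u\in H^1(\Omega)$, and the skeleton pairings then pin down $\wat u$ and $\wat\sigma$ as the claimed traces. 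Once these identifications are in hand, I would set $\phi:=\wat\sigma|_\Gamma-\phi_0$ and reverse the integration-by-parts computation from direction (i) to check that $(u,\phi)$ solves both~\eqref{eq:jn:1} and~\eqref{eq:jn:2}.

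**Main obstacle.**
The delicate point is the identification step in direction (ii): showing that $u\in L_2(\Omega)$ together with $\ssigma\in\LL_2(\Omega)$ and equation~\eqref{eq:dfb:2} actually forces $u\in H^1(\Omega)$ with $\nabla u=\ssigma$, rather than just a piecewise-$H^1$ function. This is precisely where the definition of $H^{1/2}(\cS)$ as the restriction space of global $H^1(\Omega)$-functions (not a mere product of local trace spaces) does the work: the conformity of $\wat u$ enforces inter-element continuity, gluing the piecewise gradient into a global one. I would isolate this as the crux and argue it carefully, since everything else is routine integration by parts once the variables are correctly identified.
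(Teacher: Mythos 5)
Your proposal follows essentially the same route as the paper's proof: in (i), testing \eqref{eq:jn:1} with $v\in C_0^\infty(\Omega)$ to get $\div\nabla u=-f\in L_2(\Omega)$, piecewise integration by parts for \eqref{eq:dfb:1}--\eqref{eq:dfb:2}, and the identity $\phi=\nabla u\cdot\n-\phi_0$ (obtained by comparing \eqref{eq:jn:1} with the integration-by-parts identity) substituted into \eqref{eq:jn:2} to produce \eqref{eq:dfb:3}; in (ii), extracting $\ssigma=\nabla u$ and $\wat u=u|_\cS$ from \eqref{eq:dfb:2} and then passing back to \eqref{eq:jn}. You also correctly isolate as the crux the gluing step --- that the conformity built into the definition of $H^{1/2}(\cS)$ upgrades the piecewise identity $\pwnabla u=\ssigma$ to $u\in H^1(\Omega)$ --- a point the paper's own proof leaves entirely implicit.

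There is, however, one concrete slip in (ii): you attribute \emph{all three} identifications, including $\wat\sigma=\nabla u\cdot\n$, to testing \eqref{eq:dfb:2}. But $\wat\sigma$ does not appear in \eqref{eq:dfb:2} at all --- that equation involves only $\ssigma$, $u$ and $\wat u$ --- so no choice of $\ttau$ there can yield information about $\wat\sigma$. The identification of $\wat\sigma$ must come from \eqref{eq:dfb:1}: first test with $v\in C_0^\infty(\el)$ to get $-\div\ssigma=f$ on each element, hence $\ssigma\in\HH(\div,\TT)$; then test with general (discontinuous) $v\in H^1(\TT)$ and integrate by parts elementwise to obtain $\sum_{\el\in\TT}\dual{v}{\ssigma\cdot\n_\el-\wat\sigma|_{\partial\el}}_{\partial\el}=0$, which pins down $\wat\sigma=\ssigma\cdot\n=\nabla u\cdot\n$ on $\cS$. (For the weaker claim that $(u,\wat\sigma|_\Gamma-\phi_0)$ solves \eqref{eq:jn:1} it suffices to restrict \eqref{eq:dfb:1} to conforming $v\in H^1(\Omega)$, for which $\dual{\wat\sigma}{\jump{v}}_\cS=\dual{\wat\sigma}{v}_\Gamma$; this is the shortcut the paper takes, but the full statement of the lemma asserts the identity on all of $\cS$, so the piecewise test argument is needed.) With this correction your argument is complete and matches the paper's in substance.
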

\begin{proof}
  We first show \textit{(i)}.
  Using $v\in C_0^\infty(\Omega)$ in~\eqref{eq:jn:1} shows that
  \begin{align}\label{lem:equi:100}
    \div\nabla u=-f\in L_2(\Omega).
  \end{align}
  Hence, $\nabla u\in \HH(\div,\Omega)$ and eventually $\nabla u\cdot\n \in H^{-1/2}(\cS)$.
  This yields $(\nabla u, u, \trace u, \nabla u\cdot \n)\in U(\TT)$.
  Identity~\eqref{lem:equi:100} and integration by parts implies~\eqref{eq:dfb:1}
  for all $v\in H^1(\TT)$. Integration by parts also shows~\eqref{eq:dfb:2}
  for all $\ttau\in \HH(\div,\TT)$. From~\eqref{lem:equi:100} and~\eqref{eq:jn:1}
  we conclude that
  \begin{align*}
    \dual{\phi}{t}_\Gamma = \dual{\nabla u\cdot\n}{t}_\Gamma - \dual{\phi_0}{t}_\Gamma \quad
    \text{ for all } t\in H^{1/2}(\Gamma).
  \end{align*}
  Using the symmetry of $\slo$, this leads us to
  \begin{align*}
    \dual{\slo\phi}{\psi}_\Gamma = \dual{\slo\nabla u \cdot\n}{\psi}_\Gamma - \dual{\slo\phi_0}{\psi}_\Gamma
    \quad\text{ for all } \psi\in H^{-1/2}(\Gamma)
  \end{align*}
  If we plug the last identity into~\eqref{eq:jn:2}, we obtain exactly~\eqref{eq:dfb:3}
  for all $\psi\in H^{-1/2}(\Gamma)$. In total, $(\nabla u, u, \trace u, \nabla u\cdot \n)\in U(\TT)$
  is a solution of~\eqref{eq:b:1}. Furthermore, it is also a solution of~\eqref{eq:b:2}. This
  follows immediately as $(\nabla u, u, \trace u, \nabla u\cdot \n)\in U(\Omega)$
  by the canonical restriction, and as $V(\Omega)\subset V(\TT)$.
  
  Now we show \textit{(ii)}. To that end, denote by
  $(\ssigma,u,\wat u,\wat\sigma)\in U(\TT)$ a solution of~\eqref{eq:b:1}. 
  Equation~\eqref{eq:dfb:2} first shows that $\ssigma = \nabla u$ and hence $u\in H^1(\Omega)$, as well
  as $\widehat u = u|_\cS$.
  As $\wat\sigma\in H^{-1/2}(\cS)$, we have $\wat\sigma|_\Gamma-\phi_0\in H^{-1/2}(\Gamma)$,
  and from~\eqref{eq:dfb:1} follows~\eqref{eq:jn:1}. Likewise,~\eqref{eq:jn:2} follows
  from~\eqref{eq:dfb:3} using $\widehat u = u|_\cS$.
  The case that $(\ssigma,u,\wat u,\wat\sigma)\in U(\TT)$ is a solution of~\eqref{eq:b:2} follows
analogously.
\end{proof}
We additionally need the following stronger result, which shows the surjectivity of the operator associated
to our bilinear form.
\begin{lemma}\label{lem:surj}
  For every $\ell \in V(\TT)'$ there exists $\uu\in U(\TT)$ with $b(\uu,\vv) = \ell(\vv)$ for all $\vv\in V(\TT)$.
\end{lemma}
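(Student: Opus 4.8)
The plan is to reduce the equation $b(\uu,\cdot)=\ell$ to the classical Johnson-N\'ed\'elec coupling~\eqref{eq:jn} and to invoke its unique solvability (Lemma~\ref{lem:jn}). First I would split $\ell\in V(\TT)'$ along the three factors of $V(\TT)=H^1(\TT)\times\HH(\div,\TT)\times H^{-1/2}(\Gamma)$, writing $\ell(v,\ttau,\psi)=\ell_1(v)+\ell_2(\ttau)+\ell_3(\psi)$. Since $H^1(\TT)$ and $\HH(\div,\TT)$ are Hilbert spaces with the stated product inner products, the Riesz theorem yields $\GG_1,\VV\in\LL_2(\Omega)$ and $F_1,w_2\in L_2(\Omega)$ such that
\[
 \ell_1(v)=\vdual{\GG_1}{\pwnabla v}_\Omega+\vdual{F_1}{v}_\Omega,
 \qquad
 \ell_2(\ttau)=\vdual{\VV}{\ttau}_\Omega+\vdual{w_2}{\pwdiv\ttau}_\Omega,
\]
and the identification $H^{-1/2}(\Gamma)'\cong H^{1/2}(\Gamma)$ gives $\lambda\in H^{1/2}(\Gamma)$ with $\ell_3(\psi)=\dual{\psi}{\lambda}_\Gamma$. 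Testing $b(\uu,\vv)=\ell(\vv)$ with $(v,0,0)$, $(0,\ttau,0)$ and $(0,0,\psi)$ separately turns the problem into~\eqref{eq:dfb:1}--\eqref{eq:dfb:3} carrying these right-hand sides.

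I would next remove the volume data $\GG_1$ and $w_2$ by the affine substitution $\ssigma=\ssigma'+\GG_1$, $u=u'+w_2$. A short computation cancels the $\GG_1$-term in the first and the $w_2$-term in the second equation, so that, with $\WW:=\VV-\GG_1\in\LL_2(\Omega)$, the reduced system reads
\begin{align*}
 \vdual{\ssigma'}{\pwnabla v}_\Omega-\dual{\wat\sigma}{\jump{v}}_\cS &= \vdual{F_1}{v}_\Omega,\\
 \vdual{\ssigma'}{\ttau}_\Omega+\vdual{u'}{\pwdiv\ttau}_\Omega-\dual{\wat u}{\jump{\ttau\cdot\n}}_\cS &= \vdual{\WW}{\ttau}_\Omega,\\
 \dual{\slo\wat\sigma}{\psi}_\Gamma+\dual{(1/2-\dlo)\wat u}{\psi}_\Gamma &= \dual{\psi}{\lambda}_\Gamma.
\end{align*}
The key observation is that this is exactly the DPG rewriting of~\eqref{eq:jn} with data $\GG=-\WW$, $F=F_1$, $\rho=0$ and the $\lambda$ above; such $\GG\in\LL_2(\Omega)$ is admissible Johnson-N\'ed\'elec data, so Lemma~\ref{lem:jn} provides a solution $(u,\phi)\in H^1(\Omega)\times H^{-1/2}(\Gamma)$. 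Mimicking the construction in Lemma~\ref{lem:equi}\textit{(i)}, I then set $\ssigma':=\WW+\nabla u$, $u':=u$, $\wat u:=u|_\cS$ and $\wat\sigma:=\ssigma'\cdot\n$ on $\cS$.

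It remains to verify that this tuple lies in $U(\TT)$ and solves the reduced system. Testing~\eqref{eq:jn:1} with $v\in C_0^\infty(\Omega)$ gives $\div\ssigma'=-F_1\in L_2(\Omega)$, hence $\ssigma'\in\HH(\div,\Omega)$ and therefore $\wat\sigma\in H^{-1/2}(\cS)$, $\wat u\in H^{1/2}(\cS)$, so $(\ssigma',u',\wat u,\wat\sigma)\in U(\TT)$. Element-wise integration by parts, exactly as in the proof of Lemma~\ref{lem:equi}, then produces the first reduced equation from $\div\ssigma'=-F_1$, the second from $u\in H^1(\Omega)$ together with $\ssigma'=\WW+\nabla u$ (the term $\vdual{\WW}{\ttau}_\Omega$ surviving on both sides), and the third coincides with~\eqref{eq:jn:2} once one checks $\ssigma'\cdot\n|_\Gamma=\phi$ by global integration by parts in~\eqref{eq:jn:1}. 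Undoing the substitution finally yields $\uu=(\ssigma'+\GG_1,\,u+w_2,\,\wat u,\,\wat\sigma)\in U(\TT)$ with $b(\uu,\cdot)=\ell$. I expect the main difficulty to be conceptual rather than computational: one must correctly match the data slots of the two formulations, in particular recognizing that the $\HH(\div,\TT)'$-part of $\ell$ becomes, after the shift, the volume datum $\GG=-\WW$ of~\eqref{eq:jn} (tested against $\nabla v$), and that defining $\wat\sigma$ through $\ssigma'\in\HH(\div,\Omega)$ rather than through the non-$\HH(\div)$ field $\ssigma=\ssigma'+\GG_1$ is essential for $\wat\sigma\in H^{-1/2}(\cS)$.
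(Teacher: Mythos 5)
Your proof is correct and takes essentially the same route as the paper: both reduce the problem to the classical Johnson--N\'ed\'elec coupling via Lemma~\ref{lem:jn} with volume data $F=F_1$ and $\GG=-\WW$ (the paper's $\sS-\tT$), and then reconstruct the tuple by identical formulas --- once you undo your affine shift, your solution $(\ssigma,u,\wat u,\wat\sigma)$ coincides exactly with the paper's $\ssigma=\nabla\wilde u+\tT$, $u=\wilde u+T$, $\wat u=\wilde u|_\cS$, $\wat\sigma=(\nabla\wilde u-\sS+\tT)\cdot\n$. The only cosmetic difference is that you absorb the whole boundary datum into $\lambda$ and take $\rho=0$, whereas the paper keeps $\lambda$ arbitrary and sets $\rho=\slo^{-1}(\mu-\lambda)$, so your variant even avoids invoking the invertibility of $\slo$.
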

\begin{proof}
  Using the Riesz representation theorem, we write
  \begin{align*}
    \ell(\vv) = \vdual{S}{v}_\Omega + \vdual{\sS}{\pwnabla v}_\Omega
    + \vdual{\tT}{\ttau}_\Omega + \vdual{T}{\pwdiv\ttau}_\Omega
    + \dual{\mu}{\psi}_\Gamma
  \end{align*}
  with $\sS = \pwnabla S$ and $T = \pwdiv\tT$ and $\mu\in H^{1/2}(\Gamma)$.
  According to Lemma~\ref{lem:jn}, there is a unique solution
  $(\wilde u,\wilde\phi)\in H^1(\Omega)\times H^{-1/2}(\Omega)$ of~\eqref{eq:jn} with right-hand side data
  $F:=S\in L_2(\Omega)$, $\GG := \sS-\tT\in\LL_2(\Omega)$, arbitrary $\lambda\in H^{1/2}(\Gamma)$
  and $\rho := \slo^{-1}(\mu-\lambda)\in H^{-1/2}(\Gamma)$. From~\eqref{eq:jn:1} it follows
  that $\nabla \wilde u-\sS+\tT\in H(\div,\Omega)$ with $-\div(\nabla \wilde u-\sS+\tT) = S$.
  Now define $\ssigma := \nabla\wilde u + \tT$, $u := \wilde u + T\in L_2(\Omega)$,
  $\wat u = \wilde u|_\cS$, and $\wat\sigma := (\nabla \wilde u-\sS+\tT)\cdot\n \in H^{-1/2}(\cS)$.
  Integration by parts shows
  \begin{align*}
    \vdual{\ssigma}{\pwnabla v}_\Omega - \dual{\wat\sigma}{\jump{\ttau\cdot\n}}_\cS
    &= \vdual{S}{v}_\Omega + \vdual{\sS}{\pwnabla v}_\Omega,\\
    \vdual{\ssigma}{\ttau}_\Omega + \vdual{u}{\pwdiv\ttau}_\Omega - 
    \dual{\wat u}{\jump{v}}_\cS &= \vdual{\tT}{\ttau}_\Omega + \vdual{T}{\pwdiv\ttau}_\Omega.
  \end{align*}
  Furthermore,~\eqref{eq:jn:1} shows that $\wat\sigma|_\Gamma = \rho+\phi$.
  Therefore, by definition of $\rho$ and~\eqref{eq:jn:2}, we conclude that
  $\slo\wat\sigma = \mu - (1/2-\dlo)\wat u$.
  We have thus shown that $b(\uu,\vv) = \ell(\vv)$ for all $\vv\in V(\TT)$.
\end{proof}
\subsection{Bielak-MacCamy coupling and norm equivalences}\label{section:norms}
In order to relate the norm $\norm{\cdot}{U(\TT),\opt}$ to a norm of our choice, we will investigate
norm equivalences in the test spaces. To that end define seminorms in $V(\Omega)$ and $V(\TT)$ by
\begin{align*}
  \norm{\vv}{V(\Omega)}
  &:= \norm{\ttau+\pwnabla v}{\LL_2(\Omega)} + \norm{\pwdiv\ttau}{L_2(\Omega)}\\
  &\qquad +\norm{(1/2-\adlo)\psi - \ttau\cdot\n}{H^{-1/2}(\Gamma)} + \norm{\slo\psi - v}{H^{1/2}(\Gamma)},\\
  \norm{\vv}{V(\TT),\opt} &:= \norm{\ttau+\pwnabla v}{\LL_2(\Omega)} + \norm{\pwdiv\ttau}{L_2(\Omega)}\\
  &\qquad +\norm{\jump{ (\ttau-\EE(1/2-\adlo)\psi)\cdot\n}}{1/2,\cS} + \norm{\jump{(v-\slp\psi)\n}}{1/2,\cS}.
\end{align*}
Here, $\EE:H^{-1/2}(\Gamma)\rightarrow \HH(\div,\Omega)$ is a bounded and linear extension operator, i.e.,
$(\EE\psi)\cdot\n_\Omega = \psi$. See~\cite[Cor.~2.8]{GiraultR_86_NS} for an explicit construction of $\EE$.
Equivalence of norms in the test space amounts to an analysis of the adjoint problem.
In case of the nonsymmetric coupling, the adjoint problem is the so-called Bielak-MacCamy coupling,
which first appeared in~\cite{bm83}. Given $(\GG,F,\rho,\lambda)\in \LL_2(\Omega)\times L_2(\Omega)
\times H^{-1/2}(\Gamma) \times H^{1/2}(\Gamma)$,
it consists in finding $(v,\psi)\in H^1(\Omega)\times H^{-1/2}(\Gamma)$ such that
\begin{subequations}\label{eq:bmc}
\begin{align}
  \vdual{\nabla v}{\nabla u}_\Omega + \dual{(1/2-\adlo)\psi}{u}_\Gamma &= 
  \vdual{\GG}{\nabla u}_\Omega + \vdual{F}{u}_\Omega + \dual{\rho}{u}_\Gamma\label{eq:bmc:1}\\
  \dual{\phi}{\slo\psi}_\Gamma - \dual{\phi}{v}_\Gamma &= \dual{\phi}{\lambda}_\Gamma\label{eq:bmc:2}
\end{align}
\end{subequations}
for all $(u,\phi)\in H^1(\Omega)\times H^{-1/2}(\Gamma)$. Again, proof of the existence of a unique solution
is not straightforward since the problem is not elliptic. However, using ideas from~\cite{affkmp13,ghs12},
the following result can be shown.
\begin{lemma}\label{lem:bmc}
  For given $(\GG,F,\rho,\lambda)\in
    \LL_2(\Omega)\times L_2(\Omega) \times H^{-1/2}(\Gamma) \times H^{1/2}(\Gamma)$,
  the variational formulation~\eqref{eq:bmc} has a unique solution
  $(v,\psi)\in H^1(\Omega)\times H^{-1/2}(\Gamma)$, and
  \begin{align*}
    \norm{v}{H^1(\Omega)} + \norm{\psi}{H^{-1/2}(\Gamma)} \lesssim \norm{\GG}{\LL_2(\Omega)} + 
    \norm{F}{L_2(\Omega)} + \norm{\rho}{H^{-1/2}(\Gamma)} + \norm{\lambda}{H^{1/2}(\Gamma)}.
  \end{align*}
\end{lemma}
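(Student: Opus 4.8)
The plan is to avoid a fresh well-posedness analysis altogether and instead read off Lemma~\ref{lem:bmc} from the stability of the Johnson--N\'ed\'elec coupling (Lemma~\ref{lem:jn}) by a transposition argument, exploiting that the Bielak--MacCamy form is exactly the transpose of the Johnson--N\'ed\'elec form. Throughout set $W := H^1(\Omega)\times H^{-1/2}(\Gamma)$, and denote by $b_\bm(v,\psi;u,\phi)$ the bilinear form on the left-hand side of~\eqref{eq:bmc} and by $b_\jn(u,\phi;v,\psi)$ the one from the proof of Lemma~\ref{lem:jn}.

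The first and only genuinely computational step is to verify the algebraic identity
\begin{align*}
  b_\bm(v,\psi;u,\phi) = b_\jn(u,\phi;v,\psi) \quad\text{for all } (u,\phi),\,(v,\psi)\in W.
\end{align*}
This rests on just two facts. The adjoint $\adlo$ is by definition the conjugate of $\dlo$ with respect to $\dual{\cdot}{\cdot}_\Gamma$, whence $\dual{(1/2-\adlo)\psi}{u}_\Gamma = \dual{(1/2-\dlo)u}{\psi}_\Gamma$; and the single layer operator $\slo$ is symmetric, so $\dual{\phi}{\slo\psi}_\Gamma = \dual{\slo\phi}{\psi}_\Gamma$. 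Substituting these two relations (and the trivial symmetry $\vdual{\nabla v}{\nabla u}_\Omega=\vdual{\nabla u}{\nabla v}_\Omega$) into the definition of $b_\bm$ and matching terms produces $b_\jn$.

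Next I would define the operators $B_\jn,B_\bm:W\to W'$ by $\dual{B_\jn(u,\phi)}{(v,\psi)} := b_\jn(u,\phi;v,\psi)$ and $\dual{B_\bm(v,\psi)}{(u,\phi)} := b_\bm(v,\psi;u,\phi)$. Lemma~\ref{lem:jn} asserts precisely that $B_\jn$ is a bounded bijection with $\norm{B_\jn^{-1}}{}$ depending only on $\Omega$. The identity above says that $B_\bm$ is the conjugate operator $B_\jn'$, under the canonical identification $W''=W$, which is legitimate because $W$ is a reflexive Hilbert space. Applying Lemma~\ref{lem:basic} with $U=V=W$ then shows that $B_\jn'=B_\bm$ is itself an isomorphism, giving existence and uniqueness of the solution $(v,\psi)\in W$ of~\eqref{eq:bmc}. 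For the quantitative bound, the right-hand side of~\eqref{eq:bmc} defines a functional $\ell_\bm\in W'$ by
\begin{align*}
  \ell_\bm(u,\phi) = \vdual{\GG}{\nabla u}_\Omega + \vdual{F}{u}_\Omega + \dual{\rho}{u}_\Gamma + \dual{\phi}{\lambda}_\Gamma,
\end{align*}
whose dual norm is controlled by $\norm{\GG}{\LL_2(\Omega)}+\norm{F}{L_2(\Omega)}+\norm{\rho}{H^{-1/2}(\Gamma)}+\norm{\lambda}{H^{1/2}(\Gamma)}$ via Cauchy--Schwarz and the definitions of the dual norms. Since $(v,\psi)=B_\bm^{-1}\ell_\bm$ and $\norm{B_\bm^{-1}}{}=\norm{(B_\jn^{-1})'}{}=\norm{B_\jn^{-1}}{}$, the claimed estimate follows with a constant depending only on $\Omega$.

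The point worth emphasizing is where the real difficulty lives. The Bielak--MacCamy form is \emph{not} elliptic, so a self-contained proof would have to mimic Lemma~\ref{lem:jn}, i.e.\ introduce a rank-one stabilization, prove equivalence of the stabilized and unstabilized solutions, establish ellipticity of the stabilized form, and close with Lax--Milgram; that stabilization is the genuine obstacle and the place where the ideas of~\cite{affkmp13,ghs12} enter (and where the scaling assumption $\diam(\Omega)<1$ for $d=2$, needed for ellipticity of $\slo$, is used). The transposition argument sketched above sidesteps this entirely by inheriting non-ellipticity-free well-posedness from Lemma~\ref{lem:jn}, so the only thing I must check with care is the transpose identity and the correct pairing conventions for $\adlo$ and $\slo$.
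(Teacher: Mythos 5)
Your proof is correct, but it follows a genuinely different route from the paper's. The paper proves Lemma~\ref{lem:bmc} by mirroring its proof of Lemma~\ref{lem:jn}: it passes to the rank-one stabilized problem~\eqref{eq:bmc:stab}, invokes \cite[Thm.~8]{affkmp13} for the equivalence of~\eqref{eq:bmc} and~\eqref{eq:bmc:stab} and \cite[Thm.~9]{affkmp13} for continuity and ellipticity of the stabilized form, bounds the right-hand side functional, and concludes with Lax--Milgram; that is, it relies on the BMC-specific stabilization results of~\cite{affkmp13} rather than deriving them from the Johnson--N\'ed\'elec case. Your transposition argument instead reuses Lemma~\ref{lem:jn}: the identity $b_\bm(v,\psi;u,\phi)=b_\jn(u,\phi;v,\psi)$ does hold (symmetry of $\slo$ and the duality definition of $\adlo$), so with $W:=H^1(\Omega)\times H^{-1/2}(\Gamma)$ reflexive one has $B_\bm=B_\jn'$, and Lemma~\ref{lem:basic} converts bijectivity of $B_\jn$ into bijectivity of $B_\bm$ with equal inverse norm. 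This is more economical (no BMC-specific citations needed) and makes explicit the structural fact --- stated in the paper just before~\eqref{eq:bmc} but not exploited in its proof --- that Bielak--MacCamy is the adjoint of Johnson--N\'ed\'elec. Two small points you should patch to make the argument airtight: (i) Lemma~\ref{lem:jn} gives solvability only for right-hand sides of the data-induced form $\vdual{\GG}{\nabla v}_\Omega + \vdual{F}{v}_\Omega + \dual{\rho}{v}_\Gamma + \dual{\psi}{\lambda}_\Gamma$, so to obtain surjectivity of $B_\jn$ onto all of $W'$ and a bound on $\norm{B_\jn^{-1}}{}$ you need the (easy) observation that every $\ell\in W'$ is of this form with comparable norms, e.g.\ by Riesz representation in $H^1(\Omega)$ with $\rho=0$ --- the same device the paper uses in the proof of Lemma~\ref{lem:surj}; (ii) boundedness of $B_\jn$ is not part of the statement of Lemma~\ref{lem:jn} but is required to apply Lemma~\ref{lem:basic}; it follows from the mapping properties of $\slo$, $\dlo$ and the trace theorem, as in the proof of Lemma~\ref{lem:stab}.
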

\begin{proof}
  Denote the bilinear form on the left-hand side of~\eqref{eq:bmc} as $b_\bm\left( v,\psi; u,\phi \right)$
  and the linear functional on the right-hand side as $\ell_\bm(u,\phi)$. Consider the problem of
  finding $(v,\psi)\in H^1(\Omega)\times H^{-1/2}(\Gamma)$ such that 
  \begin{align}\label{eq:bmc:stab}
    b_\bm\left( v,\psi; u,\phi \right) + \dual{1}{\slo\psi-v}_\Gamma\dual{1}{\slo\phi-u}_\Gamma
    = \ell_\bm(u,\phi) + \dual{1}{\lambda}_\Gamma\dual{1}{\slo\phi-u}_\Gamma
  \end{align}
  for all $(u,\phi)\in H^1(\Omega)\times H^{-1/2}(\Gamma)$. According to~\cite[Thm.~8]{affkmp13},
  a solution $(v,\psi)\in H^1(\Omega)\times H^{-1/2}(\Gamma)$ of~\eqref{eq:bmc} also solves~\eqref{eq:bmc:stab}
  and vice versa. Furthermore,~\cite[Thm.~9]{affkmp13} states that the bilinear form on the left-hand side
  of~\eqref{eq:bmc:stab} is continuous and elliptic on $H^1(\Omega)\times H^{-1/2}(\Gamma)$. The
  norm of the linear functional on the right-hand side of~\eqref{eq:bmc:stab} is bounded by
  \begin{align*}
    \norm{\GG}{\LL_2(\Omega)} + \norm{F}{L_2(\Omega)} + \norm{\rho}{H^{-1/2}(\Gamma)}
    + \norm{\lambda}{H^{1/2}(\Gamma)}.
  \end{align*}
  Hence, by application of the Lax-Milgram lemma, the statement is proved.
\end{proof}
We need the following extension of~\cite[Lem.~4.4]{dg11}.
\begin{lemma}\label{lem:BMC:inhom}
  Let $\GG\in\LL_2(\Omega)$, $F\in L_2(\Omega)$, $\rho\in H^{-1/2}(\Gamma)$, and $\lambda\in H^{1/2}(\Gamma)$.
  Then, there exists a unique solution
  $(v_1,\ttau_1,\psi_1)\in H^1(\Omega)\times\HH(\div,\Omega)\times H^{-1/2}(\Gamma)$ satisfying
  \begin{subequations}\label{lem:BMC:inhom:eq}
  \begin{align}
    \ttau_1 + \nabla v_1 &= \GG\label{lem:BMC:inhom:eqA}\; \text{ in } \Omega,\\
    \div\ttau_1 &= F\label{lem:BMC:inhom:eqB}\; \text{ in } \Omega,\\
    (1/2-\adlo)\psi_1 - \ttau_1\cdot\n &= \rho\label{lem:BMC:inhom:eqC}\; \text{ on } \Gamma,\\
    \slo\psi_1 - v_1 &= \lambda\label{lem:BMC:inhom:eqD}\; \text{ on } \Gamma.
  \end{align}
  \end{subequations}
  Furthermore,
  \begin{align}\label{lem:BMC:inhom:stab}
    \norm{\ttau_1}{\HH(\div,\Omega)} + \norm{v_1}{H^1(\Omega)} + \norm{\psi_1}{H^{-1/2}(\Gamma)}
    \lesssim \norm{\GG}{\LL_2(\Omega)} + \norm{F}{L_2(\Omega)} + \norm{\rho}{H^{-1/2}(\Gamma)}
    +\norm{\lambda}{H^{1/2}(\Gamma)}.
  \end{align}
\end{lemma}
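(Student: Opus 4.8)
The plan is to reduce this inhomogeneous strong system to the variational Bielak-MacCamy coupling already analyzed in Lemma~\ref{lem:bmc}. Given the data $(\GG,F,\rho,\lambda)$, let $(v_1,\psi_1)\in H^1(\Omega)\times H^{-1/2}(\Gamma)$ be the unique solution of~\eqref{eq:bmc} furnished by Lemma~\ref{lem:bmc} for exactly these data, and set $\ttau_1:=\GG-\nabla v_1\in\LL_2(\Omega)$. Then~\eqref{lem:BMC:inhom:eqA} holds by construction, and~\eqref{lem:BMC:inhom:eqD} follows immediately from~\eqref{eq:bmc:2}, since $\phi\in H^{-1/2}(\Gamma)$ is arbitrary there and hence $\slo\psi_1-v_1=\lambda$ on $\Gamma$.

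The next step is to recover~\eqref{lem:BMC:inhom:eqB} and~\eqref{lem:BMC:inhom:eqC} from~\eqref{eq:bmc:1}. Substituting $\nabla v_1=\GG-\ttau_1$ into~\eqref{eq:bmc:1} and cancelling the term $\vdual{\GG}{\nabla u}_\Omega$ yields $-\vdual{\ttau_1}{\nabla u}_\Omega=\vdual{F}{u}_\Omega+\dual{\rho-(1/2-\adlo)\psi_1}{u}_\Gamma$ for all $u\in H^1(\Omega)$. Testing first with $u\in C_0^\infty(\Omega)$ annihilates the boundary term and identifies $\div\ttau_1=F$ in the distributional sense; since $F\in L_2(\Omega)$, this gives $\ttau_1\in\HH(\div,\Omega)$ together with~\eqref{lem:BMC:inhom:eqB}. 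Integrating by parts for general $u\in H^1(\Omega)$ via $\vdual{\ttau_1}{\nabla u}_\Omega=-\vdual{F}{u}_\Omega+\dual{\ttau_1\cdot\n}{u}_\Gamma$ and comparing with the rewritten equation leaves $\dual{(1/2-\adlo)\psi_1-\ttau_1\cdot\n-\rho}{u}_\Gamma=0$ for all $u\in H^1(\Omega)$; as the traces $u|_\Gamma$ exhaust $H^{1/2}(\Gamma)$, this is precisely~\eqref{lem:BMC:inhom:eqC}.

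For uniqueness I would reverse the argument. Any $(v_1,\ttau_1,\psi_1)$ solving the strong system~\eqref{lem:BMC:inhom:eq} produces a solution $(v_1,\psi_1)$ of~\eqref{eq:bmc}: testing~\eqref{lem:BMC:inhom:eqA} with $\nabla u$ and integrating the $\ttau_1$-term by parts, using~\eqref{lem:BMC:inhom:eqB} and~\eqref{lem:BMC:inhom:eqC}, reproduces~\eqref{eq:bmc:1}, while~\eqref{lem:BMC:inhom:eqD} gives~\eqref{eq:bmc:2}. The uniqueness asserted in Lemma~\ref{lem:bmc} then forces $(v_1,\psi_1)$ to be unique, and $\ttau_1$ is determined by~\eqref{lem:BMC:inhom:eqA}. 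Finally, the stability bound~\eqref{lem:BMC:inhom:stab} follows by combining the estimate of Lemma~\ref{lem:bmc} for $\norm{v_1}{H^1(\Omega)}+\norm{\psi_1}{H^{-1/2}(\Gamma)}$ with $\norm{\ttau_1}{\LL_2(\Omega)}\leq\norm{\GG}{\LL_2(\Omega)}+\norm{\nabla v_1}{\LL_2(\Omega)}$ and $\norm{\div\ttau_1}{L_2(\Omega)}=\norm{F}{L_2(\Omega)}$. The whole argument is essentially bookkeeping once Lemma~\ref{lem:bmc} is available; the only point requiring genuine care is the correct tracking of boundary terms in the integration by parts, so that the normal-trace equation~\eqref{lem:BMC:inhom:eqC} emerges with the right sign and the strong-to-weak equivalence is exact in both directions.
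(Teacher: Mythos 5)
Your proposal is correct, and its existence and stability parts coincide with the paper's proof: both take $(v_1,\psi_1)$ as the Bielak--MacCamy solution from Lemma~\ref{lem:bmc}, set $\ttau_1:=\GG-\nabla v_1$, recover \eqref{lem:BMC:inhom:eqB} by testing \eqref{eq:bmc:1} with compactly supported (equivalently, $H^1_0$) functions, recover \eqref{lem:BMC:inhom:eqC} by testing with general $u\in H^1(\Omega)$ and integrating by parts, and read off \eqref{lem:BMC:inhom:stab} from the triangle inequality and Lemma~\ref{lem:bmc}. Where you genuinely diverge is uniqueness. The paper argues directly on the homogeneous strong system: from $\Delta \wilde v_1=0$ and $\wilde v_1=\slo\wilde\psi_1$ on $\Gamma$ it represents $\wilde v_1=\slp\wilde\psi_1$ in $\Omega$, then plays the interior jump relation $\partial_\n\wilde v_1=(1/2+\adlo)\wilde\psi_1$ against $\partial_\n\wilde v_1=-\wilde\ttau_1\cdot\n=-(1/2-\adlo)\wilde\psi_1$ from \eqref{lem:BMC:inhom:eqA} and \eqref{lem:BMC:inhom:eqC} to force $\wilde\psi_1=0$. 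You instead prove the converse (strong-to-weak) implication --- any solution of \eqref{lem:BMC:inhom:eq} yields a solution of \eqref{eq:bmc} with the same data --- and then invoke the uniqueness already contained in Lemma~\ref{lem:bmc}; your verification of this direction (integration by parts with $\ttau_1\in\HH(\div,\Omega)$, substitution of \eqref{lem:BMC:inhom:eqC}, and pairing \eqref{lem:BMC:inhom:eqD} with arbitrary $\phi$) is sound, and $\ttau_1$ is then pinned down by \eqref{lem:BMC:inhom:eqA}. Your route is more economical: it uses Lemma~\ref{lem:bmc} as the single black box for both existence and uniqueness and establishes the exact equivalence of the strong system with the weak coupling, at the price of having to track the boundary terms carefully in both directions. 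The paper's route requires the potential-theoretic machinery (harmonic representation and the jump relations of the single-layer potential) but gives a short, self-contained kernel argument that does not depend on the weak formulation being uniquely solvable.
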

\begin{proof}
  Choose $(v_1,\psi_1)$ as solution of the Bielak-MacCamy coupling~\eqref{eq:bmc} with the respective
  data $\GG, F, \rho$, and $\lambda$.
  By Lemma~\ref{lem:bmc}, such a solution exists uniquely and fulfills~\eqref{lem:BMC:inhom:eqD} as well as
  \begin{align}\label{lem:BMC:inhom:eq:1}
    \norm{v_1}{H^1(\Omega)} + \norm{\psi_1}{H^{-1/2}(\Gamma)}
    \lesssim \norm{\GG}{\LL_2(\Omega)} + \norm{F}{L_2(\Omega)} + \norm{\rho}{H^{-1/2}(\Gamma)}
    +\norm{\lambda}{H^{1/2}(\Gamma)}.
  \end{align}
  Now define $\ttau_1 := \GG-\nabla v_1$, i.e.,~\eqref{lem:BMC:inhom:eqA} holds.
  At first, $\ttau_1\in \LL_2(\Omega)$ only, but testing~\eqref{eq:bmc:1}
  with $u\in H^1_0(\Omega)$ shows that $\ttau_1\in\HH(\div,\Omega)$ with $\div\ttau_1=F$,
  i.e.,~\eqref{lem:BMC:inhom:eqB}.
  Then, testing ~\eqref{eq:bmc:1} with $u\in H^1(\Omega)$ shows that
  \begin{align*}
    \vdual{\ttau_1}{\nabla u}_\Omega = \dual{(1/2-\adlo)\psi_1}{u}_\Gamma - \dual{\rho}{u}_\Gamma
    - \vdual{\div\ttau_1}{u}_\Omega,
  \end{align*}
  which gives~\eqref{lem:BMC:inhom:eqC}. By definition of $\ttau_1$, there holds
  \begin{align*}
    \norm{\ttau_1}{\HH(\div,\Omega)} \lesssim \norm{\GG}{\LL_2(\Omega)} + \norm{\nabla v_1}{\LL_2(\Omega)}
    + \norm{F}{L_2(\Omega)},
  \end{align*}
  and together with~\eqref{lem:BMC:inhom:eq:1}, this shows~\eqref{lem:BMC:inhom:stab}.
  
  To see that a solution of~\eqref{lem:BMC:inhom:eq} is unique, assume that
  $(\wilde v_1,\wilde \ttau_1, \wilde\psi_1)$ is a solution of~\eqref{lem:BMC:inhom:eq}
  with vanishing right-hand side. Then $-\Delta\wilde v_1=0$ in $\Omega$
  and $\slo\wilde v_1=\wilde\psi_1$ on $\Gamma$. Therefore, $\wilde v_1 = \slp\wilde\psi_1$ in $\Omega$
  and hence $\partial_{\n}\wilde v_1 = (1/2 + \adlo)\wilde\psi_1$. However,
  ~\eqref{lem:BMC:inhom:eqA} and~\eqref{lem:BMC:inhom:eqC}
  also show that $\partial_{\n}\wilde v_1 = -\wilde\ttau_1\cdot\n = -(1/2-\adlo)\wilde\psi_1$.
  We conclude that $\wilde\psi_1=0$, which implies $\wilde v_1=0$ and $\wilde\ttau_1=0$.
\end{proof}
\begin{lemma}\label{lem:adjoint:jn}
  For all $\ttau\in \HH(\div,\Omega)$, $v\in H^1(\Omega)$, and $\psi\in H^{-1/2}(\Gamma)$ there holds
  \begin{align*}
    \norm{v}{H^1(\Omega)}^2 + \norm{\ttau}{\HH(\div,\Omega)}^2 + \norm{\psi}{H^{-1/2}(\Gamma)}^2
    \lesssim \norm{(v,\ttau,\psi)}{V(\Omega)}^2.
  \end{align*}
  The constant hidden in $\lesssim$ only depends on $\Omega$.
\end{lemma}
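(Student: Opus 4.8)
The plan is to read the seminorm $\norm{(v,\ttau,\psi)}{V(\Omega)}$ backwards and recognize each of its four terms as the data of the inhomogeneous Bielak--MacCamy system treated in Lemma~\ref{lem:BMC:inhom}. Concretely, given an arbitrary triple $(v,\ttau,\psi)\in H^1(\Omega)\times\HH(\div,\Omega)\times H^{-1/2}(\Gamma)$, I would set
\begin{align*}
  \GG := \ttau+\nabla v,\quad F := \div\ttau,\quad
  \rho := (1/2-\adlo)\psi - \ttau\cdot\n,\quad \lambda := \slo\psi - v.
\end{align*}
By the mapping properties collected earlier these belong to $\LL_2(\Omega)$, $L_2(\Omega)$, $H^{-1/2}(\Gamma)$, and $H^{1/2}(\Gamma)$, respectively, and by this very definition the triple $(v,\ttau,\psi)$ satisfies the four equations~\eqref{lem:BMC:inhom:eqA}--\eqref{lem:BMC:inhom:eqD} with right-hand sides $\GG,F,\rho,\lambda$.

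The key step is then to invoke the \emph{uniqueness} part of Lemma~\ref{lem:BMC:inhom}: for the data just constructed there is exactly one solution $(v_1,\ttau_1,\psi_1)$ of~\eqref{lem:BMC:inhom:eq}, so that necessarily $(v_1,\ttau_1,\psi_1)=(v,\ttau,\psi)$. Consequently the stability bound~\eqref{lem:BMC:inhom:stab} applies verbatim to our triple and reads
\begin{align*}
  \norm{v}{H^1(\Omega)} + \norm{\ttau}{\HH(\div,\Omega)} + \norm{\psi}{H^{-1/2}(\Gamma)}
  \lesssim \norm{\GG}{\LL_2(\Omega)} + \norm{F}{L_2(\Omega)} + \norm{\rho}{H^{-1/2}(\Gamma)} + \norm{\lambda}{H^{1/2}(\Gamma)},
\end{align*}
whose right-hand side is, by construction, precisely $\norm{(v,\ttau,\psi)}{V(\Omega)}$.

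To finish, I would simply square this bound: the left-hand side of the asserted estimate is dominated by the square of the left-hand sum above (using $a^2+b^2+c^2\le(a+b+c)^2$ for nonnegative reals), and that square is $\lesssim\norm{(v,\ttau,\psi)}{V(\Omega)}^2$, with the numerical constant absorbed into $\lesssim$. I do not expect any genuine obstacle here, since all analytic difficulty—the non-ellipticity of the Bielak--MacCamy coupling, the invertibility of $\slo$, and the mapping properties of the boundary integral operators—has already been discharged in Lemmas~\ref{lem:bmc} and~\ref{lem:BMC:inhom}. The one point that must be stated carefully is that it is uniqueness, not merely existence, in Lemma~\ref{lem:BMC:inhom} that allows us to identify the given triple with the solution and thereby transfer the stability estimate to it.
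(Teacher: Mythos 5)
Your proposal is correct and follows essentially the same route as the paper: identifying $(\GG,F,\rho,\lambda)$ with the four terms of the seminorm $\norm{(v,\ttau,\psi)}{V(\Omega)}$, using the uniqueness assertion of Lemma~\ref{lem:BMC:inhom} to recognize $(v,\ttau,\psi)$ as the solution of~\eqref{lem:BMC:inhom:eq}, and transferring the stability bound~\eqref{lem:BMC:inhom:stab}. Your explicit emphasis on the role of uniqueness, and the elementary squaring step at the end, are points the paper leaves implicit, but the argument is the same.
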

\begin{proof}
  We set in Lemma~\ref{lem:BMC:inhom} the right-hand side to be 
  $\GG := \ttau + \pwnabla v$, $F:=\pwdiv\ttau$,
  $\rho := (1/2-\adlo)\psi - \ttau\cdot\n$, and $\lambda := \slo\psi-v$.
  Then, $(v,\ttau,\psi)$ is the unique solution of~\eqref{lem:BMC:inhom:eq},
  and the statement follows from~\eqref{lem:BMC:inhom:stab}.
\end{proof}
\begin{lemma}\label{lem:testnorms:equi:jn}
  For all $(\ttau,v,\psi)\in V(\TT)$ there holds
  \begin{align*}
    \norm{(v,\ttau,\psi)}{V(\TT),\opt} \lesssim \norm{(v,\ttau,\psi)}{V(\TT)} 
  \end{align*}
  where the hidden constant only depends on $\Omega$.
\end{lemma}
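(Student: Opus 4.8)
The plan is to bound each of the four terms constituting $\norm{(v,\ttau,\psi)}{V(\TT),\opt}$ separately by $\norm{(v,\ttau,\psi)}{V(\TT)}$, using nothing more than the triangle inequality together with boundedness properties already recorded in the excerpt. The first two terms are immediate: by the triangle inequality $\norm{\ttau+\pwnabla v}{\LL_2(\Omega)}\leq\norm{\ttau}{\LL_2(\Omega)}+\norm{\pwnabla v}{\LL_2(\Omega)}\leq\norm{\ttau}{\HH(\div,\TT)}+\norm{v}{H^1(\TT)}$, while $\norm{\pwdiv\ttau}{L_2(\Omega)}\leq\norm{\ttau}{\HH(\div,\TT)}$ directly by the definition of the piecewise graph norm.

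For the two jump terms I would first apply the estimates~\eqref{eq:jump:tau} and~\eqref{eq:jump:v} of Lemma~\ref{lem:norms} to pass from the jump seminorms on $\cS$ to the piecewise graph norms, obtaining $\norm{\jump{(\ttau-\EE(1/2-\adlo)\psi)\cdot\n}}{1/2,\cS}\leq\norm{\ttau-\EE(1/2-\adlo)\psi}{\HH(\div,\TT)}$ and $\norm{\jump{(v-\slp\psi)\n}}{1/2,\cS}\leq\norm{v-\slp\psi}{H^1(\TT)}$. A further triangle inequality then splits each expression into the contribution of the original test-function component and the contribution of the boundary potential.

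It remains to control the two boundary-potential contributions by $\norm{\psi}{H^{-1/2}(\Gamma)}$. The key observation is that $\EE(1/2-\adlo)\psi\in\HH(\div,\Omega)$ and $\slp\psi\in H^1(\Omega)$ are globally defined on $\Omega$, so their piecewise norms coincide with their global norms and no spurious interior-skeleton jumps are introduced. I would then invoke the boundedness of the extension operator $\EE\colon H^{-1/2}(\Gamma)\to\HH(\div,\Omega)$ together with that of the adjoint double-layer operator $\adlo\colon H^{-1/2}(\Gamma)\to H^{-1/2}(\Gamma)$ to get $\norm{\EE(1/2-\adlo)\psi}{\HH(\div,\Omega)}\lesssim\norm{\psi}{H^{-1/2}(\Gamma)}$, and the boundedness of the single-layer potential $\slp\colon H^{-1/2}(\Gamma)\to H^1(\Omega)$ to get $\norm{\slp\psi}{H^1(\Omega)}\lesssim\norm{\psi}{H^{-1/2}(\Gamma)}$. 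Combining the four estimates yields the claim; since the jump estimates~\eqref{eq:jump:v}--\eqref{eq:jump:tau} hold with constant one and the potential and operator bounds depend only on $\Omega$, the resulting hidden constant depends only on $\Omega$, as asserted.

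This lemma is entirely routine and I do not expect a genuine obstacle. The only point deserving a moment's care is the bookkeeping in the third and fourth terms, namely recognizing that the subtracted potentials live in the \emph{global} spaces $\HH(\div,\Omega)$ and $H^1(\Omega)$; this is precisely what makes it legitimate to apply the piecewise jump estimates and then the global operator bounds without generating extra interior jump contributions.
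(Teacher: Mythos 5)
Your proposal is correct and follows essentially the same route as the paper's (very terse) proof: triangle inequality, the jump estimates~\eqref{eq:jump:v}--\eqref{eq:jump:tau}, and the continuity of $\slp: H^{-1/2}(\Gamma)\rightarrow H^1(\Omega)$, $\EE:H^{-1/2}(\Gamma)\rightarrow \HH(\div,\Omega)$, and $\adlo:H^{-1/2}(\Gamma)\rightarrow H^{-1/2}(\Gamma)$. Your additional remark that the subtracted potentials lie in the global spaces $H^1(\Omega)$ and $\HH(\div,\Omega)$, so their broken norms equal their global norms, is exactly the bookkeeping the paper leaves implicit.
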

\begin{proof}
  The statement follows with the triangle inequality, the estimates~\eqref{eq:jump:v}--\eqref{eq:jump:tau},
  and the continuity of the operators $\slp: H^{-1/2}(\Gamma)\rightarrow H^1(\Omega)$,
  $\EE:H^{-1/2}(\Gamma)\rightarrow \HH(\div,\Omega)$, and
  $\adlo:H^{-1/2}(\Gamma)\rightarrow H^{-1/2}(\Gamma)$.
\end{proof}
\section{Proofs of main theorems}\label{sec:proofs}
The next lemma shows that our new bilinear form $b$ is definite.
\begin{lemma}\label{lem:definite}
  The bilinear form $b$, given by the left-hand side of~\eqref{eq:dfb},
  is definite on $U(\TT)\times V(\TT)$ as well as on $U(\Omega)\times V(\Omega)$, i.e.,
  \begin{align*}
    b(\uu,\vv) &= 0 \quad \forall \vv \Leftrightarrow \uu=0,\\
    b(\uu,\vv) &= 0 \quad \forall \uu \Leftrightarrow \vv=0.
  \end{align*}
\end{lemma}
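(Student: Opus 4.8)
The plan is to verify the two biconditionals separately, treating the broken pair $U(\TT)\times V(\TT)$ and the conforming pair $U(\Omega)\times V(\Omega)$ in parallel; in both, the reverse implications are immediate from bilinearity, so only the forward directions need work. For the first statement (injectivity of $B$) I would reuse the equivalence from Subsection~\ref{section:surj}. If $b(\uu,\vv)=0$ for all $\vv$, then $\uu$ solves~\eqref{eq:b:1} (resp.~\eqref{eq:b:2}) with the trivial data $f=0$, $u_0=0$, $\phi_0=0$, for which $\ell=0$. By Lemma~\ref{lem:equi}(ii) (with $\phi_0=0$) the pair $(u,\wat\sigma|_\Gamma)$ solves the homogeneous Johnson-N\'ed\'elec coupling~\eqref{eq:jn}, and moreover $\ssigma=\nabla u$, $\wat u=u|_\cS$, $\wat\sigma=\nabla u\cdot\n$. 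The uniqueness in Lemma~\ref{lem:jn} forces $u=0$ and $\wat\sigma|_\Gamma=0$, so $\nabla u=0$ and hence $\ssigma=0$, $\wat u=0$, $\wat\sigma=0$, i.e.\ $\uu=0$. This is precisely the injectivity already announced after Lemma~\ref{lem:equi}.

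For the second statement (injectivity of the adjoint) I would suppose $b(\uu,\vv)=0$ for all $\uu=(\ssigma,u,\wat u,\wat\sigma)$ and test componentwise. Varying $\ssigma\in\LL_2(\Omega)$ and $u\in L_2(\Omega)$ gives $\ttau+\pwnabla v=0$ and $\pwdiv\ttau=0$, i.e.~\eqref{lem:BMC:inhom:eqA}--\eqref{lem:BMC:inhom:eqB} with vanishing data. Varying $\wat\sigma\in H^{-1/2}(\cS)$, and using the symmetry of $\slo$ together with the fact that $\dual{\slo\wat\sigma}{\psi}_\Gamma$ depends only on $\wat\sigma|_\Gamma$, I would first restrict to $\wat\sigma=\qq\cdot\n$ with $\qq\in\HH(\div,\Omega)$ supported away from $\Gamma$; the boundary term then drops out and the standard DPG jump argument (cf.~\cite{dg11}) yields $v\in H^1(\Omega)$. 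With $v$ conforming, $\dual{v}{\wat\sigma}_\cS=\dual{v}{\wat\sigma}_\Gamma$, and since $\wat\sigma\mapsto\wat\sigma|_\Gamma$ is onto $H^{-1/2}(\Gamma)$ we obtain $\slo\psi-v=0$ on $\Gamma$, i.e.~\eqref{lem:BMC:inhom:eqD} with $\lambda=0$. Symmetrically, varying $\wat u\in H^{1/2}(\cS)$ and using $\dual{(1/2-\dlo)\wat u}{\psi}_\Gamma=\dual{\wat u|_\Gamma}{(1/2-\adlo)\psi}_\Gamma$ forces first $\ttau\in\HH(\div,\Omega)$ by testing in the interior, and then $(1/2-\adlo)\psi-\ttau\cdot\n=0$ on $\Gamma$, i.e.~\eqref{lem:BMC:inhom:eqC} with $\rho=0$. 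Thus $(v,\ttau,\psi)$ solves the Bielak-MacCamy system~\eqref{lem:BMC:inhom:eq} with vanishing right-hand side, and the uniqueness part of Lemma~\ref{lem:BMC:inhom} gives $\vv=0$. On the conforming pair $U(\Omega)\times V(\Omega)$ the same computation applies, but $v\in H^1(\Omega)$ and $\ttau\in\HH(\div,\Omega)$ are already part of $V(\Omega)$, so the two jump steps are unnecessary.

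The step I expect to be the main obstacle is exactly this broken-space reduction in the second statement: one must choose the skeleton test functions $\wat\sigma$ and $\wat u$ supported in the interior to recover the global conformity $v\in H^1(\Omega)$ and $\ttau\in\HH(\div,\Omega)$, and then collapse the skeleton pairings $\dual{\cdot}{\cdot}_\cS$ to boundary pairings $\dual{\cdot}{\cdot}_\Gamma$ so that the homogeneous relations match~\eqref{lem:BMC:inhom:eq} verbatim. The first statement, by contrast, is essentially a corollary of the equivalence and uniqueness already established.
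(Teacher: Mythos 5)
Your proposal is correct and follows essentially the same route as the paper: the first biconditional is obtained via Lemma~\ref{lem:equi}~(ii) and the uniqueness of the Johnson--N\'ed\'elec coupling (Lemma~\ref{lem:jn}), and the second by componentwise testing (interior fields, then skeleton unknowns restricted to the interior, then boundary restrictions) to force conformity of $(v,\ttau)$ and the two boundary relations. The only cosmetic difference is the endgame of the second part: where the paper re-derives the potential-theoretic uniqueness argument inline ($v=\slp\psi$, jump relations, hence $\psi=-\psi=0$), you invoke the uniqueness part of Lemma~\ref{lem:BMC:inhom}, whose proof is exactly that argument, so the content is identical.
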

\begin{proof}
  The implications ``$\Leftarrow$'' in the statements are obvious. Suppose that $b(\uu,\vv)=0$ for all
  $\vv\in V(\TT)$.
  This means that $\uu\in U(\TT)$ solves~\eqref{eq:b:1} with data $f=0$ and
  $(1/2-\dlo)u_0+\slo\phi_0=0$. Due to Lemma~\ref{lem:equi} \textit{(ii)},
  $(u,\wat\sigma|_\Gamma-\phi_0)\in H^1(\Omega)\times H^{-1/2}(\Gamma)$ is a solution
  of~\eqref{eq:jn} with the respective data $\GG=0$, $F=0$, $\rho=\phi_0$, and
  $\lambda = (1/2-\dlo)u_0$.
  We conclude that $(u,\wat\sigma|_\Gamma)$ solves~\eqref{eq:jn} with all right-hand side data
  equal to $0$. Lemma~\ref{lem:jn} shows $u=0$ and $\wat\sigma|_\Gamma=0$.
  Lemma~\ref{lem:equi} \textit{(ii)} finally shows that $\uu=0$.
  
  Now suppose $b(\uu,\vv)=0$ for all $\uu\in U(\TT)$. Testing with $\ssigma\in C_0^\infty(\el)^d$ shows
  $\ttau = -\pwnabla v$, and testing with $\wat\sigma$ shows that $v\in H^1(\Omega)$ and
  $v = \slo\psi$ on $\Gamma$.
  Furthermore, testing with appropriate $u\in C_0^\infty(\el)$ shows
  $\pwdiv\ttau=0$ on all $\el\in\TT$, 
  such that piecewise integration by parts yields
  \begin{align*}
    (\nabla u,\ttau)_\Omega = \dual{u}{\jump{\ttau\cdot\n}}_\cS
    = \dual{(1/2-\dlo)u}{\psi}_\Gamma = 0
  \end{align*}
  for all $u\in C_0^\infty(\Omega)$. Hence $\ttau\in\HH(\div,\Omega)$. 
  Now, as $\div \nabla v= 0$ in $\Omega$ and $v=\slo\psi$ on $\Gamma$, it holds $v = \slp\psi$
  in $\Omega$. This implies
  that on $\Gamma$ it holds
  \begin{align*}
    (1/2-\adlo)\psi=\ttau\cdot\n = -\frac{\partial}{\partial\n}V\psi = -(\adlo+1/2)\psi,
  \end{align*}
  where the last equation follows from the definition of $\adlo$. This shows that $\psi = -\psi$
  and hence $\psi=0$. It follows that $v=0$ and $\ttau=0$.

  With exactly the same reasoning one proves that $b$ is definite on $U(\Omega)\times V(\Omega)$.
\end{proof}
\begin{proof}[Proof of Theorems~\ref{thm:solve} and~\ref{thm:opt}]
  We will use Lemma~\ref{lem:dpg} with
  $U:=U(\TT)$, $V:=V(\TT)$, and $B: \uu \mapsto (\vv\mapsto b(\uu,\vv))$. First
  we check that the assumptions from Lemma~\ref{lem:basic} hold. Clearly, $U(\TT)$ and $V(\TT)$
  are reflexive Banach spaces with their respective norms.
  The operator $B$ is linear, bounded due to Lemma~\ref{lem:stab},
  injective due to Lemma~\ref{lem:definite}, and surjective due to Lemma~\ref{lem:surj}.
  Hence, Lemma~\ref{lem:dpg} is applicable.
  We obtain a unique solution $\uu\in U(\TT)$ with stability~\eqref{lem:dpg:stab}
  and best approximation~\eqref{lem:dpg:opt} in the norm $\norm{\cdot}{U(\TT),\opt}$.
  It remains to show the bounds
  \begin{align}\label{eq:unorm}
    \norm{\uu}{U(\Omega)} \lesssim \norm{\uu}{U(\TT),\opt} \lesssim \norm{\uu}{U(\TT)}
    \quad\text{ for all } \uu\in U(\TT).
  \end{align}

  We start with the upper bound. Inspection shows that $\norm{\cdot}{V(\TT),\opt}$ is the optimal test norm
  to $\norm{\cdot}{U(\TT)}$. Hence, we can use Lemma~\ref{lem:testnorms:equi:jn}
  and the left one of the identities~\eqref{lem:basic:norms} (the assumptions in Lemma~\ref{lem:basic}
  have been checked above) to conclude the upper bound in~\eqref{eq:unorm}.

  Now we show the lower bound in~\eqref{eq:unorm}.
  Due to Lemma~\ref{lem:adjoint:jn} and $V(\Omega)\subset V(\TT)$ we first obtain
  \begin{align}\label{eq:100}
    \sup_{\vv\in V(\Omega)}\frac{b(\uu,\vv)}{\norm{\vv}{V(\Omega)}} \lesssim
    \sup_{\vv\in V(\TT)}\frac{b(\uu,\vv)}{\norm{\vv}{V(\TT)}} = 
    \norm{\uu}{U(\TT),\opt}
  \end{align}
  Inspection shows that $\norm{\vv}{V(\Omega)}$ is the optimal test norm
  to $\norm{\cdot}{U(\Omega)}$. It remains to check that the left-hand side of~\eqref{eq:100}
  is indeed $\norm{\cdot}{U(\Omega)}$. To that end, we will again apply Lemma~\ref{lem:basic},
  but this time with $U:=U(\Omega)$, $V:=V(\Omega)$, and $B: \uu \mapsto (\vv\mapsto b(\uu,\vv))$.
  We check again that the assumptions hold. The spaces $U(\Omega)$ and $V(\Omega)$ are reflexive Banach
  spaces with their respective norms.
  The operator $B$ is linear and bounded due to Lemma~\ref{lem:stab}.
  Bijectivity follows from the Babu\v{s}ka-Brezzi theory which applies by
  Lemmas~\ref{lem:definite} and~\ref{lem:adjoint:jn}.
  Hence, by Lemma~\ref{lem:basic}, the identity on the left of~\eqref{lem:basic:norms}
  shows that the left-hand side of~\eqref{eq:100} is indeed $\norm{\cdot}{U(\Omega)}$.
  This shows the lower bound in~\eqref{eq:unorm} and
  concludes the proof of Theorems~\ref{thm:solve} and~\ref{thm:opt}.
\end{proof}
\section{Numerical experiments}\label{section:numerics}
We conducted two numerical experiments for $d=2$ to support our analysis. As partitions $\TT$ we choose
regular triangulations, i.e., all elements $T\in\TT$ are triangles, and there are no hanging nodes.
All triangulations are shape-regular and quasi-uniform; $h$ denotes the global mesh size and $N:=\#\TT$
the number of triangles, which satisfy $N^{-1/2} \simeq h$.
For $p\in\N$, denote by $P_p(\el)$ the space of polynomials of degree at most $p$ on an element
$\el$ and by $P_p(\ed)$ polynomials of degree at most $p$ on an edge $\ed$. Then define
\begin{align*}
  P_p(\TT) &:= \prod_{\el\in\TT} P_p(\el),\\
  P_p(\cS) &:= \left\{ u_p \mid u_p|_\ed \in P_p(\ed) \text{ for all edges } \ed \right\},\\
  S_p(\cS) &:= \left\{ u_p \mid u_p|_\ed \in P_p(\ed) \text{ for all edges } \ed \right\}
  \cap H^{1/2}(\cS).
\end{align*}
As discrete trial space, we use the conforming lowest-order space
\begin{align*}
  U_\hp(\TT) = U_0(\TT) := P_0(\TT)^d \times P_0(\TT) \times S_1(\cS) \times P_0(\cS).
\end{align*}
Theorem~\ref{thm:opt} and standard approximation theory combined with the definitions of the
norms $\norm{\wat u}{H^{1/2}(\cS)}$, $\norm{\wat\sigma}{H^{-1/2}(\cS)}$ by canonical traces,
cf.~\cite{BoffiBF_13,dg11}, shows that
\begin{align*}
  \norm{(\ssigma-\ssigma_\hp,u-u_\hp,&\wat u-\wat u_\hp,\wat\sigma-\wat\sigma_\hp)}{U(\Omega)}
  \lesssim h^s \Bigl(\norm{u}{H^{1+s}(\Omega)} + \norm{f}{H^{s}(\Omega)}\Bigr) \quad\text{ for } s\leq 1.
\end{align*}
The optimal test space $\Theta(U_0(\TT))$ has finite dimension, but its computation requires
to invert the Riesz map in $V = V(\TT)$, which has infinite dimension.
We will approximate the operator $\Theta$ in a finite-dimensional subspace $V_0(\TT)\subset V(\TT)$.
This approach is called \textit{practical} DPG method, cf.~\cite{gq14}.
We choose $V_0(\TT)$ to be
\begin{align*}
  V_0(\TT) := P_2(\TT) \times P_2(\TT)^2 \times P_1(\TT|_\Gamma),
\end{align*}
where $\TT|_\Gamma$ are the edges of the mesh $\TT$ on the boundary.
In both examples, we define a domain $\Omega$ and prescribe a function $u:\Omega\rightarrow\R$.
Then, with $u^c=0$, we solve~\eqref{eq:tp} with $u_0=u|_\Gamma$ and $\phi_0 = \partial_\nn u|_\Gamma$.

\paragraph{Experiment with smooth solution.}
In the first example, $\Omega := (-0.1,0.1)^2$ and $u(x,y)=\sin(\pi x)\sin(\pi y)$
is a smooth function. Hence, we expect and observe a convergence rate of $\OO(h)$.
\begin{figure}[ht]
  \centering
  \psfrag{enrgerr^2}[cr][cr]{\scriptsize$\norm{\uu-\uu_0}{U(\TT),\opt}^2$}
  \psfrag{L2_u^2}[cr][cr]{\scriptsize$\norm{u-u_0}{L_2(\Omega)}^2$}
  \psfrag{L2_sigma^2}[cr][cr]{\scriptsize$\norm{\ssigma-\ssigma_0}{\LL_2(\Omega)}^2$}
  \psfrag{N^{-1}}[cr][cr]{\scriptsize$N^{-1}$}
  \psfrag{N}[cr][cr]{\scriptsize $N$}
  \includegraphics[width=0.8\textwidth]{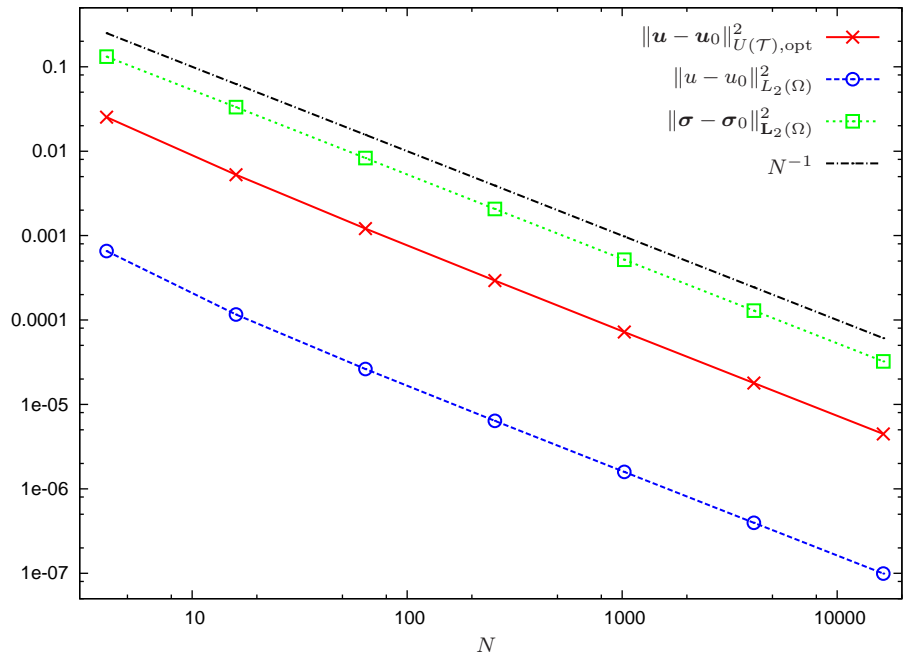}
  \caption{Squared error norms for experiment with smooth solution.}
  \label{fig:exp:smooth}
\end{figure}

\paragraph{Experiment with non-smooth solution.}
In the second example, $\Omega := (-0.25,0.25)^2\setminus (0,0.25)\times(-0.25,0)$ is an L-shaped
domain and $u(r,\phi) = r^{2/3}\sin(2\phi/3)$ with polar coordinates $(r,\phi)$ centered at
the origin. It follows that $u\in H^{s}(\Omega)$ for all $s<1+2/3$ and $f=0$ so that
we expect a convergence rate of $\OO(h^{2/3})$. The energy error $\norm{\uu-\uu_0}{U(\TT),\opt}$
as well as $\norm{\ssigma-\ssigma_0}{\LL_2(\Omega)}$ indeed have order $\OO(h^{2/3})$,
while we observe an improved order of $\OO(h)$ for $\norm{u-u_0}{L_2(\Omega)}$.
\begin{figure}[ht]
  \centering
  \psfrag{enrgerr^2}[cr][cr]{\scriptsize$\norm{\uu-\uu_0}{U(\TT),\opt}^2$}
  \psfrag{L2_u^2}[cr][cr]{\scriptsize$\norm{u-u_0}{L_2(\Omega)}^2$}
  \psfrag{L2_sigma^2}[cr][cr]{\scriptsize$\norm{\ssigma-\ssigma_0}{\LL_2(\Omega)}^2$}
  \psfrag{N^{-1}}[cr][cr]{\scriptsize$N^{-1}$}
  \psfrag{N^{-2/3}}[cr][cr]{\scriptsize$N^{-2/3}$}
  \psfrag{N}[cr][cr]{\scriptsize $N$}
  \includegraphics[width=0.8\textwidth]{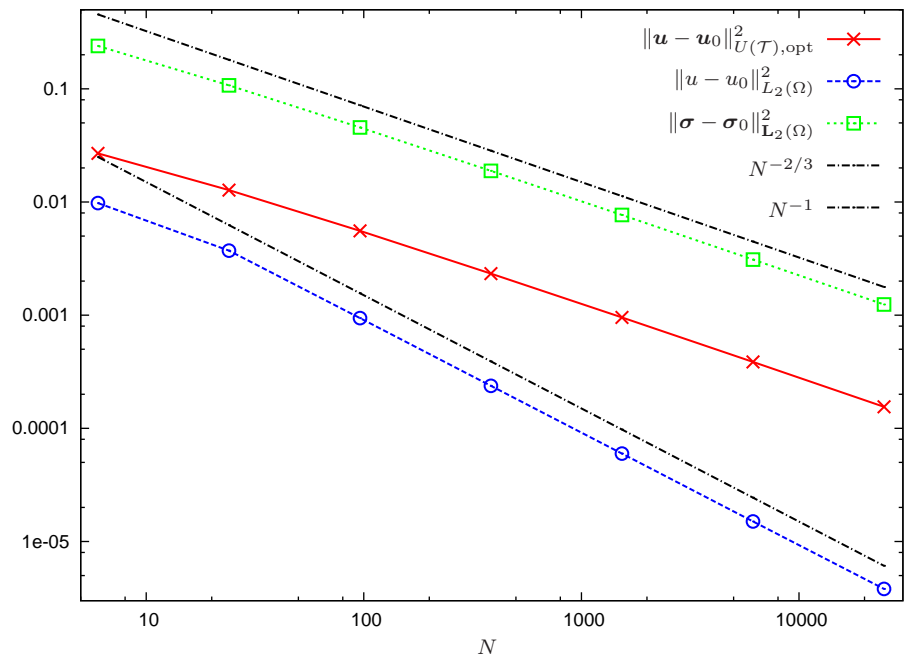}
  \caption{Squared error norms for experiment with non-smooth solution.}
  \label{fig:exp:nonsmooth}
\end{figure}
\section{Conclusion}\label{section:conclusio}
We presented a numerical method for a transmission problem. The method uses an ultra-weak (finite element)
formulation for the interior and standard boundary integral equations for the exterior. The whole
system is discretized by a discontinuous Petrov-Galerkin approach and optimal test functions.
We obtain quasi-optimality for the field variables
$u$ and $\ssigma$ as well as for the trace $\wat u$ and normal derivative $\wat\sigma$ on the interface $\Gamma$.
Our analysis builds on the unique solvability of the classical non-symmetric coupling of finite and boundary
elements. Numerical experiments support our analysis. 

We expect that our method can be extended to other PDEs in the interior for
which a DPG analysis is available, e.g., convection-diffusion~\cite{DemkowiczH_13}.
Also, based on recent results~\cite{ffkp14} on unique solvability of the non-symmetric coupling
for finite and boundary elements for elasticity problems and DPG finite elements for
linear elasticity~\cite{BramwellDGQ_12_elasticity}, we expect that our DPG strategy for transmission
problems can be extended to problems from linear elasticity.
\bibliographystyle{abbrv}
\bibliography{literature}

\begin{thebibliography}{10}

\bibitem{affkmp13}
M.~Aurada, M.~Feischl, T.~F{\"u}hrer, M.~Karkulik, J.~M. Melenk, and
  D.~Praetorius.
\newblock Classical {FEM}-{BEM} coupling methods: nonlinearities,
  well-posedness, and adaptivity.
\newblock {\em Comput. Mech.}, 51(4):399--419, 2013.

\bibitem{b70}
I.~Babu{\v{s}}ka.
\newblock Error-bounds for finite element method.
\newblock {\em Numer. Math.}, 16:322--333, 1970/1971.

\bibitem{bm83}
J.~Bielak and R.~C. MacCamy.
\newblock An exterior interface problem in two-dimensional elastodynamics.
\newblock {\em Quart. Appl. Math.}, 41(1):143--159, 1983/84.

\bibitem{BoffiBF_13}
D.~Boffi, F.~Brezzi, and M.~Fortin.
\newblock {\em Mixed finite element methods and applications}, volume~44 of
  {\em Springer Series in Computational Mathematics}.
\newblock Springer, Heidelberg, 2013.

\bibitem{BottassoMS_02_DPG}
C.~L. Bottasso, S.~Micheletti, and R.~Sacco.
\newblock The discontinuous {P}etrov-{G}alerkin method for elliptic problems.
\newblock {\em Comput. Methods Appl. Mech. Engrg.}, 191(31):3391--3409, 2002.

\bibitem{BramwellDGQ_12_elasticity}
J.~Bramwell, L.~Demkowicz, J.~Gopalakrishnan, and W.~Qiu.
\newblock A locking-free {$hp$} {DPG} method for linear elasticity with
  symmetric stresses.
\newblock {\em Numer. Math.}, 122(4):671--707, 2012.

\bibitem{b74}
F.~Brezzi.
\newblock On the existence, uniqueness and approximation of saddle-point
  problems arising from {L}agrangian multipliers.
\newblock {\em Rev. Francaise Automat. Informat. Recherche Op\'erationnelle
  S\'er. Rouge}, 8(R-2):129--151, 1974.

\bibitem{BrezziJ_79_FEMBEM}
F.~Brezzi and C.~Johnson.
\newblock On the coupling of boundary integral and finite element methods.
\newblock {\em Calcolo}, 16(2):189--201, 1979.

\bibitem{BrezziJN_78_FEMBEM}
F.~Brezzi, C.~Johnson, and J.-C. N{\'e}d{\'e}lec.
\newblock On the coupling of boundary integral and finite element methods.
\newblock In {\em Proceedings of the {F}ourth {S}ymposium on {B}asic {P}roblems
  of {N}umerical {M}athematics ({P}lze\v n, 1978)}, pages 103--114. Charles
  Univ., Prague, 1978.

\bibitem{BroersenS_14_IMA}
D.~Broersen and R.~Stevenson.
\newblock A {P}etrov-{G}alerkin discretization with optimal test space of a
  mild-weak formulation of convection-diffusion equations in mixed form.
\newblock {\em IMA Journal of Numerical Analysis}, 2014.

\bibitem{BroersenS_14_CAMWA}
D.~Broersen and R.~Stevenson.
\newblock A robust {P}etrov-{G}alerkin discretisation of convection--diffusion
  equations.
\newblock {\em Comput. Math. Appl.}, 68(11):1605--1618, 2014.

\bibitem{CaloCN_14}
V.~M. Calo, N.~O. Collier, and A.~H. Niemi.
\newblock Analysis of the discontinuous {P}etrov-{G}alerkin method with optimal
  test functions for the {R}eissner-{M}indlin plate bending model.
\newblock {\em Comput. Math. Appl.}, 66(12):2570--2586, 2014.

\bibitem{CessenatD_98_UWF}
O.~Cessenat and B.~Despres.
\newblock Application of an ultra weak variational formulation of elliptic
  {PDE}s to the two-dimensional {H}elmholtz problem.
\newblock {\em SIAM J. Numer. Anal.}, 35(1):255--299, 1998.

\bibitem{ChanHBD_14}
J.~Chan, N.~Heuer, T.~Bui-Thanh, and L.~Demkowicz.
\newblock A robust {DPG} method for convection-dominated diffusion problems
  {II}: adjoint boundary conditions and mesh-dependent test norms.
\newblock {\em Comput. Math. Appl.}, 67(4):771--795, 2014.

\bibitem{Costabel_88_SIAM}
M.~Costabel.
\newblock Boundary integral operators on {L}ipschitz domains: elementary
  results.
\newblock {\em SIAM J. Math. Anal.}, 19(3):613--626, 1988.

\bibitem{dg11}
L.~Demkowicz and J.~Gopalakrishnan.
\newblock Analysis of the {DPG} method for the {P}oisson equation.
\newblock {\em SIAM J. Numer. Anal.}, 49(5):1788--1809, 2011.

\bibitem{DemkowiczG_11_CDP}
L.~Demkowicz and J.~Gopalakrishnan.
\newblock A class of discontinuous {Petrov-Galerkin} methods. {Part II}:
  {O}ptimal test functions.
\newblock {\em Numer. Methods Partial Differential Eq.}, 27:70--105, 2011.

\bibitem{DemkowiczH_13}
L.~Demkowicz and N.~Heuer.
\newblock Robust {DPG} method for convection-dominated diffusion problems.
\newblock {\em SIAM J. Numer. Anal.}, 51(5):2514--2537, 2013.

\bibitem{ffkp14}
M.~Feischl, T.~F\"uhrer, M.~Karkulik, and D.~Praetorius.
\newblock Stability of symmetric and nonsymmetric {FEM}-{BEM} couplings for
  nonlinear elasticity problems.
\newblock {\em Numer. Math.}, in press, 2014.

\bibitem{f14}
T.~F\"uhrer.
\newblock {\em Zur Kopplung von finiten Elementen und Randelementen}.
\newblock PhD thesis, Vienna University of Technology, 2014.

\bibitem{ghs12}
G.~N. Gatica, G.~C. Hsiao, and F.-J. Sayas.
\newblock Relaxing the hypotheses of {B}ielak-{M}ac{C}amy's {BEM}-{FEM}
  coupling.
\newblock {\em Numer. Math.}, 120(3):465--487, 2012.

\bibitem{GiraultR_86_NS}
V.~Girault and P.-A. Raviart.
\newblock {\em Finite element methods for {N}avier-{S}tokes equations},
  volume~5 of {\em Springer Series in Computational Mathematics}.
\newblock Springer-Verlag, Berlin, 1986.
\newblock Theory and algorithms.

\bibitem{gq14}
J.~Gopalakrishnan and W.~Qiu.
\newblock An analysis of the practical {DPG} method.
\newblock {\em Math. Comp.}, 83(286):537--552, 2014.

\bibitem{HeuerK_14_DPGBEM}
N.~Heuer and M.~Karkulik.
\newblock Discontinuous {P}etrov-{G}alerkin boundary elements.
\newblock Technical Report http://arxiv.org/abs/1408.5374, Pontificia
  Universidad Cat\'olica de Chile, 2014.

\bibitem{HsiaoW_08_BIE}
G.~C. Hsiao and W.~L. Wendland.
\newblock {\em Boundary integral equations}, volume 164 of {\em Applied
  Mathematical Sciences}.
\newblock Springer-Verlag, Berlin, 2008.

\bibitem{JohnsonN_80_FEMBEM}
C.~Johnson and J.-C. N{\'e}d{\'e}lec.
\newblock On the coupling of boundary integral and finite element methods.
\newblock {\em Math. Comp.}, 35(152):1063--1079, 1980.

\bibitem{McLean_00_ES}
W.~McLean.
\newblock {\em Strongly elliptic systems and boundary integral equations}.
\newblock Cambridge University Press, Cambridge, 2000.

\bibitem{Nedelec_82_IEOT}
J.-C. N{\'e}d{\'e}lec.
\newblock Integral equations with nonintegrable kernels.
\newblock {\em Integral Equations Operator Theory}, 5(4):562--572, 1982.

\bibitem{OfS_14_FEMBEM}
G.~Of and O.~Steinbach.
\newblock On the ellipticity of coupled finite element and one-equation
  boundary element methods for boundary value problems.
\newblock {\em Numer. Math.}, 127(3):567--593, 2014.

\bibitem{s13}
F.-J. Sayas.
\newblock The validity of {J}ohnson-{N}\'ed\'elec's {BEM}-{FEM} coupling on
  polygonal interfaces.
\newblock {\em SIAM Rev.}, 55(1):131--146, 2013.

\bibitem{Steinbach_11_FEMBEM}
O.~Steinbach.
\newblock A note on the stable one-equation coupling of finite and boundary
  elements.
\newblock {\em SIAM J. Numer. Anal.}, 49(4):1521--1531, 2011.

\bibitem{xz03}
J.~Xu and L.~Zikatanov.
\newblock Some observations on {B}abu\v ska and {B}rezzi theories.
\newblock {\em Numer. Math.}, 94(1):195--202, 2003.

\bibitem{y95}
K.~Yosida.
\newblock {\em Functional analysis}.
\newblock Classics in Mathematics. Springer-Verlag, Berlin, 1995.
\newblock Reprint of the sixth (1980) edition.

\bibitem{zmdgpc11}
J.~Zitelli, I.~Muga, L.~Demkowicz, J.~Gopalakrishnan, D.~Pardo, and V.~M. Calo.
\newblock A class of discontinuous {P}etrov-{G}alerkin methods. {P}art {IV}:
  the optimal test norm and time-harmonic wave propagation in 1{D}.
\newblock {\em J. Comput. Phys.}, 230(7):2406--2432, 2011.

\end{thebibliography}
\end{document}